\theoremstyle{definition}
\newtheorem{thm}{Theorem}[section]
\newtheorem{cor}[thm]{Corollary}
\newtheorem{lem}[thm]{Lemma}
\newtheorem{rem}[thm]{Remark}
\newtheorem{exa}[thm]{Example}
\newcommand{\subjclass}[1]{\bigskip\noindent\emph{2010 Mathematics Subject Classification: }\enspace#1}
\newcommand{\keywords}[1]{\noindent\emph{Keywords:}\enspace#1}
\numberwithin{equation}{section}
\def\mean#1{\left\{\!\left|\,#1\,\right|\!\right\}}
\begin{document}


\baselineskip=17pt


\title{Thin obstacle problem: \\ estimates of the distance to the exact solution}

\author{{\textsc{ Darya E.  Apushkinskaya}}\\
Department of Mathematics, Saarland University, P.O. Box 151150 \\
66041 Saarbr{\"u}cken, Germany\\
\textit{E-mail: darya@math.uni-sb.de}\\
Peoples’ Friendship University of Russia (RUDN University)\\
6 Miklukho-Maklaya St, Moscow, 117198, Russian Federation \\
\\
{\textsc{ Sergey I. Repin}}\\
Steklov Institute of Mathematics at St. Petersburg,\\ Fontanka 27, 191023 St. Petersburg,
Russian Federation\\
\textit{E-mail: repin@pdmi.ras.ru}\\
University of Jyv{\"a}skyl{\"a}, P.O. Box 35  (Agora),\\
FIN-40014, Finland \\
\textit{E-mail:serepin@jyu.fi}}

\date{}

\maketitle
\bibliographystyle{alpha}
\newcommand{\tg}{\textup{tg}}
\newcommand{\ep}{\varepsilon}
\newcommand{\osc}[1]{\underset{#1}{\textup{osc}}}

\begin{abstract}
We consider elliptic variational inequalities generated by
obstacle type problems with thin obstacles. For this class of problems,
we deduce estimates of the distance (measured in terms of the natural
energy norm) between the exact solution and any function that satisfies the
boundary condition and is admissible with respect to the obstacle condition
(i.e., it is valid for any approximation regardless of the method by which
it was found).
Computation of the estimates does not require knowledge of the exact solution
and uses only the problem data and an approximation. The estimates
provide  guaranteed upper bounds of the error (error majorants) and
vanish if and only if the approximation coincides with the exact solution.
In the last section, the efficiency of error majorants is confirmed by an example, where the exact solution is known.

\subjclass{Primary 35R35; Secondary 35J20, 65K10.}

\keywords{thin obstacle; free boundary problems; variationals problems; estimates of the distance to the exact solution.}
\end{abstract}
\section{Introduction}

Let $\Omega$ be an open, connected, and bounded domain in $\mathbb{R}^n$ with Lipschitz continuous boundary $\partial\Omega$,
and let $\mathcal{M}$ be a smooth $(n-1)$-dimensional manifold in $\mathbb{R}^n$,  which divides $\Omega$ into two Lipschitz subdomains $\Omega_+$ and $\Omega_-$. 
Throughout the paper, we use the standard notation for the Lebesgue
and Sobolev spaces of functions.
Since no confusion may arise, we denote  the norm in $L^2\left( \Omega \right) $ and  the norm in the space $L^2\left( \Omega, \mathbb{R}^n\right) $ 
containing vector valued functions by one common symbol  $\|\cdot\|_{\Omega}$.

For given functions $\psi: \mathcal{M}\rightarrow \mathbb{R}$ and $\varphi : \partial\Omega\rightarrow \mathbb{R}$ satisfying $\varphi \geq \psi$ on $\mathcal{M} \cap \partial\Omega$, we consider the following variational Problem
($\mathcal P$): minimize the functional
\begin{equation}
\label{Dint}
J(v)=\frac{1}{2}\int\limits_{\Omega}|\nabla v|^2dx
\end{equation}
over the closed convex set
$$
\mathbb{K}=\left\lbrace v \in H^{1}\left( \Omega\right) : \quad v \geqslant \psi\ \text{on}\ \mathcal{M}\cap \Omega, \quad v = \varphi\ \text{on}\ \partial\Omega\right\rbrace .
$$
Here, $\varphi \in H^{1/2}(\partial\Omega)$ and the function $\psi$ is supposed to be smooth.

\vskip-0.5cm
\begin{figure}[htbp]
\centering
\includegraphics[width=0.8\textwidth]{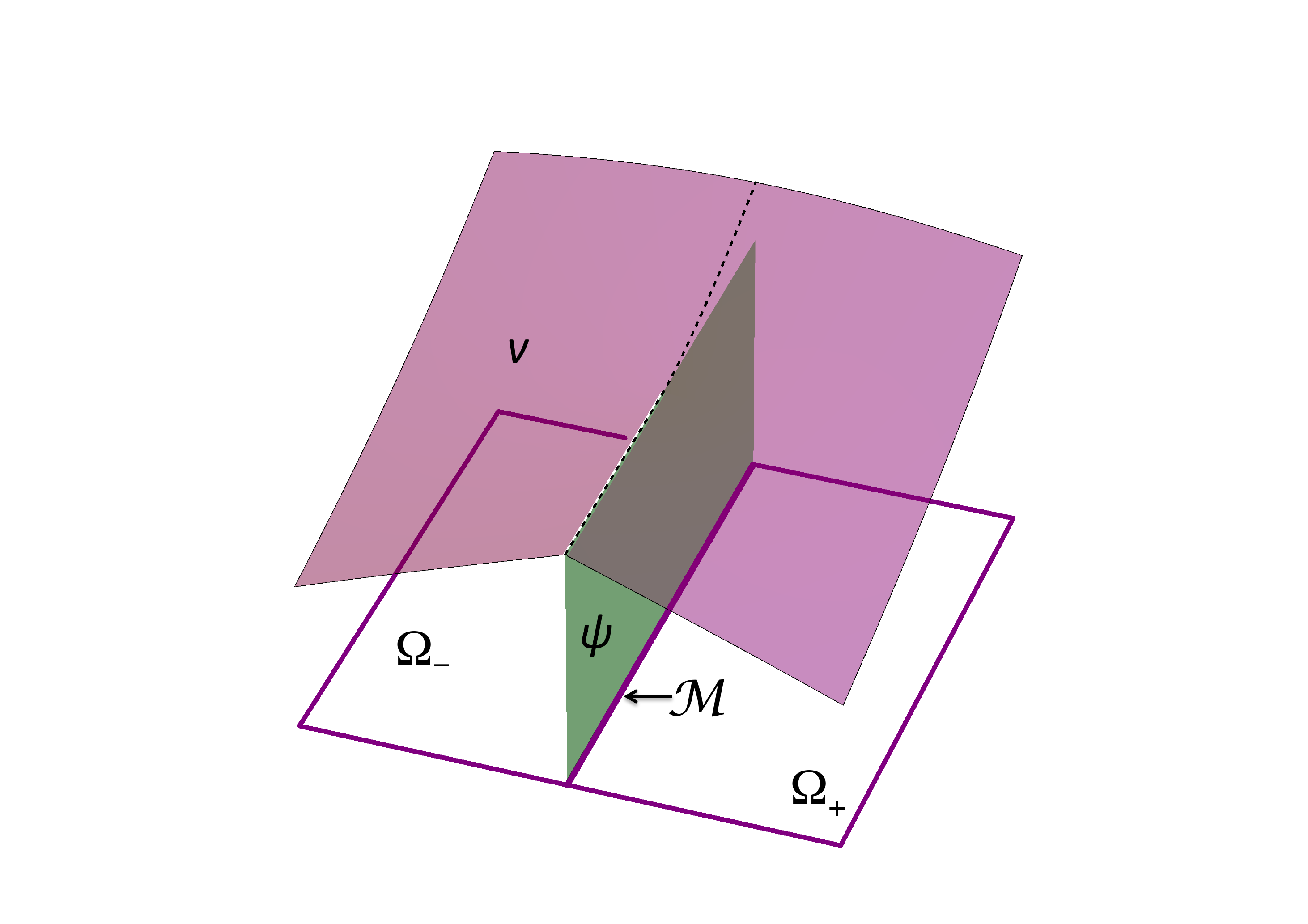}
\caption{The thin obstacle problem}
\label{bild-thin-obstacle}
\end{figure}

Problem ($\mathcal{P}$) is called the \textit{thin obstacle problem} associated with the
 thin obstacle $\psi$. In many respects, it  differs from the classical obstacle 
problem where the constrain $v\geq\psi$ is imposed on the entire domain $\Omega$.
This mathematical model arises in various real life
problems.
In the  $2D$ case 
(see Fig.~\ref{bild-thin-obstacle}),\, it describes equilibrium of an elastic membrane  above a very thin object (e.g., see \cite{KO88}). The well known Signorini problem belongs to the same class
of mathematical models. Similar models appear in continuum mechanics, e.g., in  temperature control problems
and in analysis of flow through semi-permeable walls subject to the phenomenon of osmosis (see, e.g., \cite{DLi76}). Thin obstacle problems also arise in financial mathematics if the random variation of an underlying asset changes discontinuously (see \cite{CT04},\cite{Si07},  \cite{PSU12} and the references therein).
\vspace{0.2cm}

The problem ($\mathcal{P}$) is an example of  a variational inequality,
which mathematical analysis goes back to the fundamental paper \cite{LS67}. 
Existence of the unique minimizer $u \in \mathbb{K}$   is well known (see  \cite{LS67} and also the  books 
\cite{Rod87}, \cite{Fr88} and \cite{KS80}).    For smooth $\mathcal{M}$ and $\psi$ it is also known that  $u \in C^{1, \alpha}_{loc}\left( \Omega_{\pm} \cup \mathcal{M}\right) $ with $0<\alpha \leqslant 1/2$ (see \cite{Caf79}, \cite{Ura85}, \cite{AC04} and the book \cite{PSU12}). 
This optimal regularity of $u$ guarantees that $\frac{\partial u\ }{\partial \textbf{n}_+}$ and $\frac{\partial u\ }{\partial \textbf{n}_-}$ belong to $L^2(\mathcal{M})$, where $\textbf{n}_{\pm}$ denote the outer unit normals to $\Omega_{\pm}$ on $\mathcal{M}$. 
It is also easy to see that
the minimizer $u$ satisfies the harmonic equation
$
\Delta u =0$ in the subdomains $\Omega_+$ and  $\Omega_-$,
but in general $u$ is not a harmonic function in $\Omega$. Instead, on $\mathcal{M}$, we have the so-called \textit{complimentarity conditions}
\begin{equation}
\label{complementarycond}
u-\psi \geqslant 0,\qquad
\left[\frac{\partial u}{\partial{\mathbf{n}}}\right]\geqslant 0,\qquad (u-\psi)\left[\frac{\partial u}{\partial{\mathbf{n}}}\right] =0,
\end{equation}
where $\left[\frac{\partial u}{\partial{\mathbf{n}}}\right]:=\frac{\partial u\ }{\partial \textbf{n}_+}+\frac{\partial u\ }{\partial \textbf{n}_-}$ is the jump of $\nabla u\cdot \mathbf{n}$ across $\mathcal{M}$. 
Here and later on $\cdot$ denotes the inner product in $\mathbb{R}^n$.
\vspace{0.2cm}

Thin obstacle problems have been actively studied
from the early 1970s. These studies
were mainly focused  either  on
regularity of  minimizers  (see \cite{Fre75}, \cite{Fre77}, \cite{Ric78}, \cite{Caf79}, \cite{Ura85}, \cite{AC04}, \cite{Gui09}) or  on properties of the
respective  free boundaries (see \cite{Lew72}, \cite{ACS08}, \cite{CSS08}, \cite{GP09}, \cite{KPS15}, and \cite{SiSa16}). A systematic 
overview of these results can be found in the book \cite{PSU12}.
\vspace{0.2cm}

In this paper, we are concerned with a different question. Our analysis is focused not on properties of  the exact minimizer,  but on estimates of the distance (measured in terms of the natural energy norm) between $u$  and any function $v\in \mathbb{K}$. In other words,  we wish to obtain estimates able to
detect which neighborhood of $u$ contains a function $v$ (considered as an approximation of the minimizer). 

These estimates are fully computable, i.e., they depend only on $v$ (which is
assumed to be known) and on the data of the problem ( the exact solution $u$  and the respective exact coincident set $\{u=\psi\}$ do not enter the estimate explicitly). 
A general approach to the derivation of such type estimates based
on methods of the duality theory in the calculus of variations
is presented in \cite{Re00a}.
For the classical obstacle problem (which solution is bounded in $\Omega$ from above and below by two obstacles) analogous estimates 
were obtained in (\cite{Re03}).  For the two-phase obstacle problem
(which was introduced in \cite{W01} and  studied from regularity point ov view in \cite{Ura01}, \cite{SUW04}, \cite{SW06}, and \cite{SUW07}) similar
estimates has been recently derived in \cite{RV15}. These results were
obtained by methods of the duality theory in the calculus of variations,
which are widely used for analysis of various variational and optimization
problems (e.g., see 
 \cite{DLi76}, \cite{EkTe76}, \cite{IT79}, \cite{KS80}, \cite{BoSh00}). 

It should be noted that getting explicit estimates of errors is based upon
the general relations exposed in \cite{Re00a}, is not at all a straightforwarding and simple matter. In this context, there is a clear difference with   the results mentioned above. Indeed, the estimate (\ref{27-a})  contains an integral term related to the lower dimensional set $\mathcal{M}$. Therefore,   our analysis will require   estimates with explicitly known constants  for the traces of functions  on   $\mathcal{M}$. For this purpose,  we will  introduce and analyze an auxiliary variational problem, which generates constants in special Poincar{\'{e}}-type inequalities valid for functions with zero mean boundary traces.
\vspace{0.2cm}

The main results are presented in Theorems
\ref{simple_majorant}, \ref{main_theorem},
and  \ref{theorem_Poincare},  that 
suggest different  majorants  of the norm
$\|\nabla (v-u)\|_{\Omega}$.
The majorants are nonnegative and vanish if and only if $v$ coincides with $u$.
Section~4  is devoted to the \textit{boundary thin obstacle problem} (also known as the \textit{scalar Signorini problem}). 
Finally, in the last section we consider an example,
where the exact solution of a thin obstacle problem is known. 
We find the exact distance between this solution and some selected functions
$v$ and show that our estimates provide correct upper bounds of the distance.

\section{Estimates of the distance to the exact solution}
Let $u\in \mathbb{K}$ be a minimizer of  variational problem ($\mathcal{P}$). Elementary calculations
yield the identity 
$$
J(v)-J(u)=\frac{1}{2}\|\nabla (v-u)\|_{\Omega}^2-\|\nabla u\|_{\Omega}^2+\int\limits_{\Omega}\nabla v \cdot \nabla udx,
$$
which holds for every $v\in \mathbb{K}$.
Since $u$ satisfies the respective variational inequality,
we conclude that
\begin{equation}
\frac{1}{2}\|\nabla \left( v-u\right) \|_{\Omega}^2 \leqslant J(v)-J(u), \qquad \forall v\in \mathbb{K}.
\label{basic-deviation}
\end{equation}
The inequality (\ref{basic-deviation}) does not provide a computable majorant of the distance between $u$ and $v$ because the value $J(u)$ is unknown.
Therefore, our goal is to replace  the difference $J(v)-J(u)$ in (\ref{basic-deviation}) by a fully computable quantity. 

\subsection{The first form of the  majorant}
For any $\lambda \in \Lambda:=\{
\lambda \in L^2(\mathcal{M})\,:\,
\lambda (x) \geqslant 0\, \text{a.e. on}\,\mathcal{M}\}$, we introduce the {\it perturbed} functional
$$
J_{\lambda}(v):=J(v)-\int\limits_{\mathcal{M}}\lambda  \left( v-\psi\right) 
d\mu.
$$
It is easy to see that
$$
\sup\limits_{\lambda \in \Lambda}J_{\lambda}(v)=
J(v)-\inf\limits_{\lambda \in \Lambda}\int\limits_{\mathcal{M}}\lambda (v-\psi) d\mu =
\left\{
\begin{aligned}
&J(v), \quad  \text{if} \ v\geqslant \psi\ \text{on}\ \mathcal{M},\\
&+\infty, \quad \text{otherwise.}
\end{aligned} \right. \quad ,
$$
Hence,
\begin{equation}
J(u)=\inf\limits_{v\in \mathbb{K}}J(v)=\inf\limits_{v\in \varphi+H^1_0(\Omega)}\sup\limits_{\lambda \in \Lambda} J_{\lambda}(v),
\label{J-via-J-lambda}
\end{equation}
where $\varphi+H^1_0(\Omega)
:=\left\{w=\varphi+v : v\in H^1_0\left(\Omega\right)\right\}$, and $H^1_0(\Omega)$ is a subspace of $H^1(\Omega)$ containing the functions
vanishing on the boundary.

The functional $J_\lambda$ generates  the following  variational problem $(\mathcal{P}_{\lambda})$:
 find $u_{\lambda} \in \varphi+H^1_0(\Omega)$ such that
\begin{equation}
\label{Plambda}
J_{\lambda}(u_{\lambda}):=\inf\limits_{v\in \varphi+H^1_0(\Omega)}J_{\lambda}(v).
\end{equation}
Since $\varphi + H^1_0(\Omega)$ is the affine subspace of $H^1(\Omega)$ and $J_{\lambda}$ is a quadratic functional,  the results of \cite{LS67} imply   unique solvability of the problem ($\mathcal{P}_{\lambda}$) for any  $\lambda \in \Lambda$. Moreover, in view of (\ref{J-via-J-lambda}), 
 $J(u)$ is bounded from below by the quantity $J_{\lambda}(u_{\lambda})$. Indeed,  
\begin{equation}
J(u)=\inf\limits_{v \in \varphi +H^1_0(\Omega)}\sup\limits_{\lambda \in \Lambda}J_{\lambda}(v)
\geqslant \sup\limits_{\lambda \in \Lambda}\inf\limits_{v \in \varphi +H^1_0(\Omega)}J_{\lambda}(v) \geqslant J_{\lambda}(u_{\lambda}) \quad \forall \lambda \in \Lambda.
\label{estimate-J-below}
\end{equation}

The dual counterpart of  ($\mathcal{P}_{\lambda}$)
is generated by  the Lagrangian
$$
\mathcal{L}_{\lambda}(v, y^*):=\int\limits_{\Omega}\left( y^*\cdot \nabla v-\frac{1}{2}|y^*|^2\right) dx-\int\limits_{\mathcal{M}}\lambda  (v-\psi) d\mu,
$$
which is defined on the set $( \varphi +H^1_0(\Omega)) \times L^2\left( \Omega, \mathbb{R}^n\right)$.
Obviously,
$$
J_{\lambda}(v)=\sup\limits_{y^* \in {L^2\left( \Omega, \mathbb{R}^n\right)}}
 \mathcal{L}_{\lambda}(v, y^*)
$$
and the corresponding  dual  functional $J^{*}_{\lambda}$
is defined  by the relation
$$
J^*_{\lambda}(y^*):=\inf\limits_{v\in \varphi +H^1_0(\Omega)}\mathcal{L}_{\lambda}(v,y^*).
$$
It is not difficult to see that
$$
J^{*}_{\lambda}(y^*):=\left\lbrace 
\begin{aligned}
\int\limits_{\Omega}\left( y^* \cdot \nabla \varphi -\frac{1}{2}|y^*|^2\right)dx -\int\limits_{\mathcal{M}}\lambda \left(\varphi-\psi \right) d\mu  &\quad \text{if}\quad y^*\in Q^{*}_{\lambda, \mathcal{M}},\\ 
\qquad \qquad -\infty \qquad \qquad \qquad \qquad &\quad \text{if}\quad y^* \notin Q^{*}_{\lambda, \mathcal{M}},
\end{aligned} \right.
$$
where 
$$
Q^{*}_{\lambda, \mathcal{M}}:=\left\{ y^* \in {L^2\left( \Omega, \mathbb{R}^n\right)} : \int\limits_{\Omega}y^* \cdot \nabla w dx=\int\limits_{\mathcal{M}}\lambda w  d\mu  \quad \forall w\in H^1_0\left( \Omega\right) \right\} .
$$
The set $Q^{*}_{\lambda, \mathcal{M}}$ contains functions that
satisfy (in the generalized sense) the equation ${\rm div} y^*=0$ in $\Omega_-$ and $\Omega_+$ and the
condition $[y^*\cdot {\bf n}]=\lambda$ on $\mathcal M$ (here
$[y^*\cdot  {\bf n}]$ denotes the jump of $y^*\cdot  {\bf n}$) .
The functional $J^{*}_{\lambda}$ generates a new variational Problem
($\mathcal{P}^*_{\lambda}$) (dual to ($\mathcal{P}_{\lambda}$)): find  $y_{\lambda}^* \in Q^{*}_{\lambda, \mathcal{M}}$ such that
$$
J^{*}_{\lambda}(y^{*}_{\lambda}):=\sup\limits_{y^*\in Q_{\lambda, \mathcal{M}}} J^{*}_{\lambda}(y^*).
$$
This is a quadratic maximization problem with a strictly concave and continuous functional. Well known results of convex analysis (see, e.g., \cite{EkTe76})
guarantee that it has a unique maximizer in the affine subspace $Q^{*}_{\lambda, \mathcal{M}}$. 
Moreover, we have\, the duality relation
\begin{equation}
J_{\lambda}(u_{\lambda})=\inf\limits_{v\in \varphi+H^1_0(\Omega)}J_{\lambda}(v)=\sup\limits_{y^*\in Q^{*}_{\lambda, \mathcal{M}}}J^{*}_{\lambda}(y^{*})=J^{*}_{\lambda}(y^*_{\lambda}).
\label{duality}
\end{equation}
Combining (\ref{estimate-J-below}) and (\ref{duality}), we deduce the estimate
\begin{align*}
J(v)-J(u) &\leqslant J(v)-J^{*}_{\lambda}(y^{*}_{\lambda})=J(v)-\sup\limits_{y^*\in Q^{*}_{\lambda, \mathcal{M}}}J^*_{\lambda}(y^*)\\
&=J(v)+\inf\limits_{y^*\in Q^{*}_{\lambda, \mathcal{M}}}\left( -J^*_{\lambda}(y^*)\right) =
\inf\limits_{y^*\in Q^{*}_{\lambda, \mathcal{M}}}\left[ J(v)-J^*_{\lambda}(y^*)\right] .
\end{align*}
Therefore, the inequality
\begin{equation}
J(v)-J(u) \leqslant J(v) -J^*_{\lambda}(y^*)
\label{analog-26-ReVal}
\end{equation}
holds true for all $v\in \mathbb{K}$, all $\lambda \in \Lambda$, and all $y^*\in Q^{*}_{\lambda, \mathcal{M}}$.

Thanks to the assumption $\varphi \in H^{1/2}(\partial\Omega)$, the boundary datum $\varphi$ allows a continuation as $H^1$-function on the whole set $\Omega$. We will preserve the notation $\varphi$ for the extended function.
Since $y^* \in Q^{*}_{\lambda, \mathcal{M}}$ and $v-\varphi\in H^1_0(\Omega)$
for any $v\in \mathbb{K}$, we find that
\begin{equation*}
\int\limits_{\Omega}y^{*} \cdot \nabla\varphi dx =\int\limits_{\Omega}y^{*}\cdot \nabla v dx-\int\limits_{\Omega}y^{*}\cdot \nabla (v-\varphi)dx=
\int\limits_{\Omega}y^{*}\cdot \nabla vdx -\int\limits_{\mathcal{M}}\lambda (v-\varphi)  d\mu .
\end{equation*}
Now the right-hand side of (\ref{analog-26-ReVal}) can be rewritten as follows:
\begin{multline}
\label{analog-27-ReVal}
J(v)-J^*_{\lambda}(y^*)=\int\limits_{\Omega}\left(\frac{1}{2}|\nabla v|^2+\frac{1}{2}| y^{*}|^2-y^{*}\cdot \nabla\varphi \right)dx+\int\limits_{\mathcal{M}}\lambda (\varphi -\psi) d\mu \\
=\frac{1}{2}
\int\limits_{\Omega}|\nabla v-y^*|^2dx +\int\limits_{\mathcal{M}}\lambda \left( v-\psi\right)  d\mu .
\end{multline}
Combination of (\ref{basic-deviation}), (\ref{analog-26-ReVal}) and (\ref{analog-27-ReVal}) yields the following upper bound of the error:

\begin{thm} \label{simple_majorant}
For any $v\in \mathbb{K}$, the distance to the minimizer $u$
is subject to the estimate
\begin{equation}
\label{27-a}
\|\nabla (v-u)\|_{\Omega}^{2}\leqslant \|\nabla v-y^*\|^2_{\Omega} +2\int\limits_{\mathcal{M}}\lambda \left( v-\psi\right)  d\mu ,
\end{equation}
where  $\lambda$ and $y^*$ are arbitrary functions in $\Lambda$ and $Q^*_{\lambda, \mathcal{M}}$, respectively.
\end{thm}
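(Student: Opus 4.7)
The plan is to assemble the three pieces already prepared in the preceding text: the energy inequality (\ref{basic-deviation}), the weak-duality bound (\ref{analog-26-ReVal}), and the algebraic identity (\ref{analog-27-ReVal}). The final estimate will drop out after multiplying the chain by $2$. My first step is to recall how (\ref{basic-deviation}) arose: for any $v\in\mathbb{K}$ the quadratic identity for $J$ gives $J(v)-J(u)=\tfrac12\|\nabla(v-u)\|_\Omega^2+\int_\Omega\nabla u\cdot\nabla(v-u)\,dx$, and the first-order variational inequality for the minimizer $u$ forces the inner-product term to be non-negative, producing $\tfrac12\|\nabla(v-u)\|_\Omega^2\le J(v)-J(u)$.

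The second step is to replace the unknown $J(u)$ by a computable lower bound. My strategy is to view $J(u)=\inf_{v\in\varphi+H^1_0(\Omega)}\sup_{\lambda\in\Lambda}J_\lambda(v)$ and exchange $\inf$ with $\sup$ to get the weak-duality chain $J(u)\ge\sup_\lambda\inf_v J_\lambda(v)=\sup_\lambda J_\lambda(u_\lambda)$. Since $J_\lambda$ is a coercive quadratic on an affine subspace, classical duality in the calculus of variations gives $J_\lambda(u_\lambda)=J^*_\lambda(y^*_\lambda)\ge J^*_\lambda(y^*)$ for any $y^*\in Q^*_{\lambda,\mathcal{M}}$. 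Combining this with (\ref{basic-deviation}) yields $\tfrac12\|\nabla(v-u)\|_\Omega^2\le J(v)-J^*_\lambda(y^*)$ for every admissible pair $(\lambda,y^*)$, which is exactly (\ref{analog-26-ReVal}).

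The third step is to make the right-hand side explicit. Writing out $J(v)-J^*_\lambda(y^*)$ gives an expression containing $\int_\Omega(\tfrac12|\nabla v|^2+\tfrac12|y^*|^2-y^*\cdot\nabla\varphi)\,dx+\int_\mathcal{M}\lambda(\varphi-\psi)\,d\mu$. To eliminate the extended boundary datum $\varphi$, I would use that $v-\varphi\in H^1_0(\Omega)$ is a legitimate test function in the definition of $Q^*_{\lambda,\mathcal{M}}$, giving $\int_\Omega y^*\cdot\nabla\varphi\,dx=\int_\Omega y^*\cdot\nabla v\,dx-\int_\mathcal{M}\lambda(v-\varphi)\,d\mu$. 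Substituting and completing the square in the remaining quadratic terms collapses everything to $\tfrac12\|\nabla v-y^*\|_\Omega^2+\int_\mathcal{M}\lambda(v-\psi)\,d\mu$. Multiplying by $2$ gives (\ref{27-a}).

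The hard part is the second step, specifically justifying the exchange of $\inf$ and $\sup$ and the strong-duality identity $J_\lambda(u_\lambda)=J^*_\lambda(y^*_\lambda)$; everything else is either a consequence of $u$ being a minimizer or pure algebra plus the test-function identity that defines $Q^*_{\lambda,\mathcal{M}}$. Since $J_\lambda$ is strictly convex, coercive, and lower semicontinuous on the affine space $\varphi+H^1_0(\Omega)$, and the constraint defining $Q^*_{\lambda,\mathcal{M}}$ is linear, the standard convex-analytic framework (cited here from \cite{EkTe76}) supplies this duality without further difficulty, so the argument reduces to carefully collecting the three displayed relations.
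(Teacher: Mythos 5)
Your proposal is correct and follows essentially the same route as the paper: the quadratic identity and variational inequality for $u$ give (\ref{basic-deviation}), the perturbed functional $J_\lambda$ plus weak duality ($\inf\sup\ge\sup\inf$) plus strong duality for the quadratic problem $(\mathcal{P}_\lambda)$ give (\ref{analog-26-ReVal}), and the test-function identity from $Q^*_{\lambda,\mathcal{M}}$ applied to $v-\varphi$ collapses $J(v)-J^*_\lambda(y^*)$ into (\ref{analog-27-ReVal}). One small clarification: the step $J(u)\ge\sup_\lambda\inf_v J_\lambda(v)$ is pure weak duality and needs no justification; the only place convex analysis is genuinely invoked is the strong duality $J_\lambda(u_\lambda)=J^*_\lambda(y^*_\lambda)$ for the unconstrained quadratic problem, which you correctly identify.
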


Theorem~\ref{simple_majorant} can be viewed as a generalized form of the hypercircle estimate (see \cite{PrS47} and \cite{M64})  for the considered class of problems.

\begin{rem}
\label{rem2}
Define the coincidence sets associated with  $u$ and  $v$:
$$
{\mathcal M}^u_\psi:=\{x\in {\mathcal M}\,:\,u(x)=\psi(x)\,\}\qquad{\rm and}\qquad 
{\mathcal M}^v_\psi:=\{x\in {\mathcal M}\,:\,v(x)=\psi(x)\,\}.
$$
Assume that ${\mathcal M}^u_\psi\subset {\mathcal M}^v_\psi$. In this case, the estimate (\ref{27-a}) is sharp in the sense that there exist $y^*$
and $\lambda$ such that the inequality holds as the equality.
Indeed, let $y^*=p^*:=\nabla u$ and $\lambda_*=\left[p^*\cdot {\bf n}\right]$.
Evidently, $p^*\in Q^*_{\lambda_*, \mathcal{M}}$. In view of
(\ref{complementarycond}), 
$\lambda_*=0$ on $\mathcal{M}\setminus {\mathcal M}^u_\psi$. Since
$\mathcal{M}\setminus {\mathcal M}^v_\psi\subset \mathcal{M}\setminus {\mathcal M}^u_\psi$, we conclude that
$$
\int\limits_{\mathcal{M}}\lambda_* \left( v-\psi\right)  d\mu=
\int\limits_{\mathcal{M}\setminus {\mathcal M}^v_\psi}\lambda_* \left( v-\psi\right)  d\mu=0.
$$
Hence, the right hand side of (\ref{27-a}) coincides with the left one.
\end{rem}


\subsection{Advanced forms of the majorant}
Inequality (\ref{27-a}) provides a simple and transparent form of the upper bound, but it operates with the  set
$Q^*_{\lambda, \mathcal{M}}$, which is defined by means of differential type conditions. This set is rather narrow and inconvenient if we wish to use simple approximations. In this section, we
overcome this drawback and
replace (\ref{27-a}) by a more general estimate valid for functions in the set
$$
 H\left( \Omega_{\pm}, \text{\rm{div}}\right)
:=\left\{q^*\in L^2 \left(\Omega, \mathbb{R}^n \right) : \text{\rm{div}}\,(q^*|_{\Omega_{\pm}} )\in L^2\left(\Omega_{\pm}\right),\; \left[q^*\cdot \mathbf{n}\right]\in L^2({\mathcal M})\right\},
$$ 
which is much wider than $Q^*_{\lambda, \mathcal{M}}$.

\begin{lem} \label{lemma2.2}
Let $q^* \in H(\Omega_{\pm}, \text{\rm{div}})$, and let $\lambda \in \Lambda$. 
Then
\begin{equation}
\begin{aligned}
\inf\limits_{y^* \in Q^*_{\lambda, \mathcal{M}}}\|q^*-y^*\|_{\Omega}
&\leqslant C_{F_{\Omega_+}} \|\text{\rm{div}}\, q^*\|_{\Omega_+}+C_{F_{\Omega_-}} \|\text{\rm{div}}\, q^*\|_{\Omega_-}\\
&+C_{Tr_{\mathcal{M}}}\|\lambda -[q^*\cdot \mathbf{n}]\|_{\mathcal{M}},
\end{aligned}
\label{replace_q}
\end{equation}
where  $C_{Tr_{\mathcal{M}}}$ and $C_{F_{\Omega_{\pm}}}$  are the constants defined by  (\ref{Def-trace-minimum}) and (\ref{Fried-ineq}), respectively.
\end{lem}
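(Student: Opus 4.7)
The plan is to exhibit an explicit element $y^*\in Q^*_{\lambda,\mathcal{M}}$ built as a perturbation of $q^*$ whose distance to $q^*$ is controlled by the three terms on the right of \eqref{replace_q}. Since the infimum is bounded above by $\|q^*-y^*\|_\Omega$ for any admissible $y^*$, constructing a single good candidate is enough.

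First I would seek $y^*$ in the form $y^*=q^*+\nabla\phi$ with $\phi\in H^1_0(\Omega)$. Substituting this ansatz into the defining relation of $Q^*_{\lambda,\mathcal{M}}$ and applying integration by parts separately in $\Omega_+$ and $\Omega_-$ (this is legitimate because $q^*\in H(\Omega_\pm,\text{\rm{div}})$, which is exactly the regularity required to split $\text{\rm{div}}\,q^*$ into two $L^2$ pieces and to give meaning to $[q^*\cdot\mathbf{n}]\in L^2(\mathcal{M})$), one sees that $y^*\in Q^*_{\lambda,\mathcal{M}}$ if and only if $\phi$ solves the variational problem
\[
\int_\Omega \nabla\phi\cdot\nabla w\,dx \;=\; \int_\Omega (\text{\rm{div}}\,q^*)\,w\,dx \;+\; \int_\mathcal{M} (\lambda-[q^*\cdot\mathbf{n}])\,w\,d\mu \qquad \forall\, w\in H^1_0(\Omega).
\]
The bilinear form is coercive on $H^1_0(\Omega)$ by the Friedrichs inequality, and the right-hand side is a bounded linear functional (for the boundary-type integral this uses the trace estimate with constant $C_{Tr_{\mathcal{M}}}$), so Lax--Milgram yields a unique solution $\phi$.

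Next I would carry out the energy estimate by testing the variational equation with $w=\phi$ itself. Cauchy--Schwarz applied to each of the three resulting terms gives
\[
\|\nabla\phi\|_\Omega^2 \;\leq\; \|\text{\rm{div}}\,q^*\|_{\Omega_+}\|\phi\|_{\Omega_+} + \|\text{\rm{div}}\,q^*\|_{\Omega_-}\|\phi\|_{\Omega_-} + \|\lambda-[q^*\cdot\mathbf{n}]\|_\mathcal{M}\|\phi\|_\mathcal{M}.
\]
Because $\phi$ vanishes on $\partial\Omega$, it vanishes in particular on $\partial\Omega_\pm\cap\partial\Omega$, so the Friedrichs inequalities on the subdomains furnish $\|\phi\|_{\Omega_\pm}\leq C_{F_{\Omega_\pm}}\|\nabla\phi\|_{\Omega_\pm}\leq C_{F_{\Omega_\pm}}\|\nabla\phi\|_\Omega$, while the trace inequality supplies $\|\phi\|_\mathcal{M}\leq C_{Tr_\mathcal{M}}\|\nabla\phi\|_\Omega$. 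Dividing by $\|\nabla\phi\|_\Omega$ (the case $\phi\equiv 0$ being trivial) and using $\|q^*-y^*\|_\Omega=\|\nabla\phi\|_\Omega$ delivers \eqref{replace_q}.

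I expect the routine work to be the Lax--Milgram step and the energy inequality; the main point deserving care is the piecewise integration by parts, which is what dictates the choice of the space $H(\Omega_\pm,\text{\rm{div}})$ in the first place, and the matching between the constants $C_{F_{\Omega_\pm}}$, $C_{Tr_\mathcal{M}}$ introduced in \eqref{Fried-ineq} and \eqref{Def-trace-minimum} and the norms of $\phi$ actually appearing after Cauchy--Schwarz. If those constants really do bound $\|\phi\|_{\Omega_\pm}$ and $\|\phi\|_\mathcal{M}$ for functions in $H^1_0(\Omega)$ (as their names suggest), the proof closes cleanly with no further obstruction.
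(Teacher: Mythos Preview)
Your proposal is correct and follows essentially the same route as the paper: the paper constructs the identical corrector $\phi=w_{\lambda,q^*}$ (obtained there as the minimizer of a quadratic functional rather than via Lax--Milgram, which is of course equivalent), derives the same Euler--Lagrange identity you wrote, tests it with $\phi$ itself, and applies the Friedrichs and trace inequalities exactly as you describe. The only cosmetic difference is that the paper keeps the divergence integral split as $\int_{\Omega_+}+\int_{\Omega_-}$ throughout, whereas you write it as a single $\int_\Omega$ understood piecewise.
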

\begin{proof} 
Consider an auxiliary variational problem ($\mathcal{P}_{q^*})$: minimize the functional
$$
\mathcal{J}_{q^*}(w)=\int\limits_{\Omega}\left(\frac{1}{2}|\nabla w|^2+q^*\cdot \nabla w\right)dx -\int\limits_{\mathcal{M}}\lambda w  d\mu
$$
on the space $H^1_0(\Omega)$.
For any given $q^* \in H(\Omega_{\pm}, \text{\rm{div}})$ and $\lambda \in \Lambda$, the functional $\mathcal{J}_{q^*}$ is convex, continuous, and coercive on $H^1_0(\Omega)$. Hence the problem $\mathcal{P}_{q^*}$
has a unique minimizer $w_{\lambda,q^*}\in H^1_0$ . 
\vspace{0.1cm}

Since $q^*\in H\left( \Omega_{\pm}, \text{\rm{div}}\right)$, the functional 
$\mathcal{J}_{q^*}$ has the form
\begin{equation}
\label{equa_form}
\begin{aligned}
\mathcal{J}_{q^*}(w)&=\int\limits_{\Omega}\frac{1}{2}|\nabla w|^2dx-\int\limits_{\Omega_+}w\text{\rm{div}}\, q^*dx-\int\limits_{\Omega_-}w\text{\rm{div}}\, q^*dx \\
&-\int\limits_{\mathcal{M}}\left(\lambda - [q^* \cdot \mathbf{n}]\right) w d\mu .
\end{aligned}
\end{equation}
For any $\widetilde{w}\in H^1_0(\Omega)$, the minimizer 
$w_{\lambda,q^*}$ 
satisfies the identity
\begin{equation}
\begin{aligned}
\int\limits_{\Omega}\nabla w_{\lambda,q^*}\cdot \nabla \widetilde{w}dx
=\int\limits_{\Omega_+} \widetilde{w}\text{\rm{div}}\,q^* dx+\int\limits_{\Omega_-} \widetilde{w}\text{\rm{div}}\,q^* dx
+\int\limits_{\mathcal{M}}\left(\lambda- \left[q^* \cdot \mathbf{n} \right]  \right) \widetilde{w}  d\mu .
\end{aligned}
\label{identity1}
\end{equation}
We set $\widetilde w=w_{\lambda,q^*}$ and use the estimate
\begin{equation}
\begin{aligned}
\int\limits_{\mathcal{M}}\left(\lambda- \left[q^* \cdot \mathbf{n} \right]  \right) w_{\lambda,q^*}  d\mu  &\leqslant
C_{Tr_{\mathcal{M}}}(\Omega_{\pm} )\|\nabla w_{\lambda,q^*}\|_{\Omega_{\pm}}\,\|\lambda-\left[q^* \cdot \mathbf{n} \right]\|_{\mathcal{M}}\\
&\leqslant C_{Tr_{\mathcal{M}}}\|\nabla w_{\lambda,q^*}\|_{\Omega}\,\|\lambda-\left[q^* \cdot \mathbf{n} \right]\|_{\mathcal{M}},
\end{aligned}
\label{estimate_on_M}
\end{equation}
where 
\begin{equation}
C_{Tr_{\mathcal{M}}}:=\min\{C_{Tr_{\mathcal{M}}}(\Omega_{+}), C_{Tr_{\mathcal{M}}}(\Omega_{-})\}
\label{Def-trace-minimum}
\end{equation}
and the constants come from the trace inequalities
$$
\|w\|_{\mathcal{M}} \leqslant C_{Tr_{\mathcal{M}}}(\Omega_{\pm}) \|\nabla w\|_{\Omega_{\pm}}.
$$
Two other terms in the right hand side of (\ref{identity1})
are estimated by the Friedrich's  type inequalities 
\begin{equation} \label{Fried-ineq}
\|w\|_{\Omega_{\pm}} \leqslant C_{F_{\Omega_{\pm}}} \|\nabla w\|_{\Omega_{\pm}}.
\end{equation}

Thus, (\ref{identity1})  and (\ref{estimate_on_M}) yield the estimate
\begin{equation}
\|\nabla w_{\lambda,q^*}\|_{\Omega} \leqslant C_{F_{\Omega_+}} \|\text{\rm{div}}\, q^*\|_{\Omega_+}
+C_{F_{\Omega_-}} \|\text{\rm{div}}\, q^*\|_{\Omega_-}+C_{Tr_{\mathcal{M}}}
\|\lambda - \left[ q^*\cdot \mathbf{n} \right]\|_{\mathcal{M}}.
\label{estimate_nabla_w}
\end{equation}
Notice that (\ref{identity1}) implies the identity
$$
\int\limits_{\Omega}\left(\nabla w_{\lambda,q^*}+q^*\right)\cdot \nabla \widetilde{w}dx=
\int\limits_{\mathcal{M}}\lambda \widetilde{w} d\mu  \qquad \forall \widetilde{w}\in H^1_0(\Omega),
$$
which shows that
the function $\tau^*:=\nabla w_{\lambda,q^*}+q^* \in Q^*_{\lambda, \mathcal{M}}$. Hence
$$
\inf\limits_{y^*\in Q^*_{\lambda, \mathcal{M}}}\|q^*-y^*\|_{\Omega} \leqslant \|q^*-\tau^*\|_{\Omega}=\|\nabla w_{\lambda,q^*}\|_{\Omega}.
$$
Now (\ref{replace_q}) follows from (\ref{estimate_nabla_w}).
\end{proof}
\vspace{0.2cm}

Let $q^* \in H(\Omega_{\pm}, \text{\rm{div}})$, and let $\lambda \in \Lambda$. For any $v\in \mathbb{K}$ and $y^*\in Q^{*}_{\lambda, \mathcal{M}}$ we have
\begin{equation}
\label{triangle_ineq}
\|\nabla v-y^*\|_{\Omega} \leqslant \|\nabla v-q^*\|_{\Omega}+\|q^*-y^*\|_{\Omega}.
\end{equation}
By  (\ref{27-a}), (\ref{triangle_ineq}), and (\ref{replace_q}), we  obtain the first advanced form of the error majorant:

\begin{thm} \label{main_theorem} 
For any $v\in \mathbb{K}$, the distance to the minimizer $u$ 
is subject to the estimate
\begin{equation} \label{estimate1}
\|\nabla (v-u)\|_{\Omega} \leqslant \mathfrak{M}(v, q^*, \lambda, \psi),
\end{equation}
where
$$
\begin{aligned}
\mathfrak{M}(v,q^*,\lambda,&\psi):=\|\nabla v -q^*\|_{\Omega}+\sqrt{2} \left(\,\int\limits_{\mathcal{M}}\lambda (v-\psi) d\mu \right)^{1/2}\\
&+C_{F_{\Omega_+}}\|\text{\rm{div}}\, q^*\|_{\Omega_+}+C_{F_{\Omega_-}}\|\text{\rm{div}}\, q^*\|_{\Omega_-}
+C_{Tr_{\mathcal{M}}}\|\lambda -[q^*\cdot \mathbf{n}]\|_{\mathcal{M}},
\end{aligned}
$$
$\lambda\in\Lambda$, $q^*$ is an arbitrary function in $H(\Omega_{\pm}, \text{\rm{div}})$,
$C_{F_{\Omega_+}}$, $C_{F_{\Omega_-}}$,  and 
$C_{Tr_{\mathcal{M}}}$ are the same constants as in Lemma~\ref{lemma2.2}.
\end{thm}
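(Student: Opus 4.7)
The plan is to deduce the theorem by assembling three ingredients that have already been established in the paper: the simple majorant of Theorem~\ref{simple_majorant}, the triangle inequality (\ref{triangle_ineq}), and the approximation bound of Lemma~\ref{lemma2.2}. First I would start from the square inequality (\ref{27-a}), valid for any $\lambda\in\Lambda$ and any $y^*\in Q^{*}_{\lambda,\mathcal{M}}$. Applying the elementary scalar inequality $\sqrt{a^2+b}\leqslant a+\sqrt{b}$ (with nonnegative $a,b$) to the right-hand side, one obtains
\begin{equation*}
\|\nabla(v-u)\|_{\Omega}\leqslant \|\nabla v-y^*\|_{\Omega}+\sqrt{2}\left(\int_{\mathcal{M}}\lambda(v-\psi)\,d\mu\right)^{1/2}.
\end{equation*}
This turns the original sum under a square root into a sum of two separately interpretable terms.

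Next I would relax the admissibility requirement on $y^*$. Since the last term in the displayed inequality does not depend on $y^*$, the left-hand side is in fact bounded by the infimum of $\|\nabla v-y^*\|_{\Omega}$ over $y^*\in Q^{*}_{\lambda,\mathcal{M}}$ plus the boundary integral contribution. Given an arbitrary $q^*\in H(\Omega_{\pm},\mathrm{div})$, the triangle inequality (\ref{triangle_ineq}) yields
\begin{equation*}
\inf_{y^*\in Q^{*}_{\lambda,\mathcal{M}}}\|\nabla v-y^*\|_{\Omega}\leqslant \|\nabla v-q^*\|_{\Omega}+\inf_{y^*\in Q^{*}_{\lambda,\mathcal{M}}}\|q^*-y^*\|_{\Omega}.
\end{equation*}
Now Lemma~\ref{lemma2.2} estimates the remaining infimum through $C_{F_{\Omega_+}}\|\mathrm{div}\,q^*\|_{\Omega_+}+C_{F_{\Omega_-}}\|\mathrm{div}\,q^*\|_{\Omega_-}+C_{Tr_{\mathcal{M}}}\|\lambda-[q^*\cdot\mathbf{n}]\|_{\mathcal{M}}$. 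Substituting this bound recovers exactly the functional $\mathfrak{M}(v,q^*,\lambda,\psi)$ on the right-hand side.

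Because the three preparatory statements are already in place, there is no real technical obstacle: the proof reduces to a careful chaining of inequalities together with the observation that the $L^2$-term over $\mathcal{M}$ may be factored out of the infimum over $y^*$. The only subtle point worth highlighting explicitly is the relaxation from the narrow affine set $Q^{*}_{\lambda,\mathcal{M}}$ (defined through the differential constraints $\mathrm{div}\,y^*=0$ in $\Omega_{\pm}$ and $[y^*\cdot\mathbf{n}]=\lambda$ on $\mathcal{M}$) to the much wider space $H(\Omega_{\pm},\mathrm{div})$, which is precisely what makes the majorant practical: the additional penalty terms in $\mathfrak{M}$ measure exactly how far $q^*$ is from satisfying the defining conditions of $Q^{*}_{\lambda,\mathcal{M}}$. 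Thus the proof consists of writing out the above chain and noting that $v\in\mathbb{K}$, $\lambda\in\Lambda$, and $q^*\in H(\Omega_{\pm},\mathrm{div})$ were arbitrary.
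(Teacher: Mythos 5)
Your proposal is correct and follows essentially the same route as the paper: combine the simple majorant (\ref{27-a}) with the triangle inequality (\ref{triangle_ineq}) and the approximation bound of Lemma~\ref{lemma2.2}. The only presentational difference is that you spell out the elementary inequality $\sqrt{a^2+b}\leqslant a+\sqrt{b}$ and pass to the infimum over $y^*$ explicitly, whereas the paper leaves these micro-steps implicit; the logic is the same.
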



In (\ref{estimate1}), the function $q^*$ is defined in a much wider set
of functions defined without differential relations. The majorant 
$\mathfrak{M}$ is a nonnegative functional, which vanishes if and only if
$v=u$ and $q^*=\nabla u$ almost everywhere in $\Omega$, and $\lambda=\lambda_*:=\left[\frac{\partial u}{\partial{\mathbf{n}}}\right]$ almost everywhere on $\mathcal{M}$.

\begin{rem}
\label{rem3}
By the same arguments as in Remark \ref{rem2}, we can prove that the majorant 
$\mathfrak{M}(v,q^*,\lambda,\psi)$ is sharp if
 ${\mathcal M}^u_\psi\subset {\mathcal M}^v_\psi$.
 \end{rem}

It is useful to have also  a modified version of (\ref{estimate1}), which follows from (\ref{27-a}), (\ref{triangle_ineq}),  and Young's inequalities (with the parameters $\beta_1$ and $\beta_2$).

\begin{cor}
For any $v\in \mathbb{K}$, $\beta_1>0$, $\beta_2>0$, $q^*\in H(\Omega_{\pm}, \text{\rm{div}})$, and $\lambda \in \Lambda$, we have
\begin{equation}
\|\nabla (v-u)\|^2_{\Omega} \leqslant \mathfrak{M}_1(v,q^*,\beta_1, \beta_2)+
\mathfrak{M}_2(v,q^*,\beta_1, \beta_2 \lambda, \psi),
\label{majorant_with_lambda_2}
\end{equation}
where
$$
\begin{aligned}
\mathfrak{M}_1(v, q^*, \beta_1, \beta_2):&=(1+\beta_1)\|\nabla v-q^*\|^2_{\Omega} \qquad \qquad \qquad \qquad\\
 +(1+\beta_1^{-1})&(1+\beta_2)\left[C_{F_{\Omega_+}}\|\text{\rm{div}}\, q^*\|_{\Omega_+}+C_{F_{\Omega_-}}\|\text{\rm{div}}\, q^*\|_{\Omega_-}\right]^2, \\
\mathfrak{M}_2(v,q^*,\beta_1, \beta_2, \lambda, \psi):&=(1+\beta_1^{-1})(1+\beta_2^{-1})
C^2_{Tr_{\mathcal{M}}}\|\lambda-  \left[q^* \cdot \mathbf{n}\right]\|^2_{\mathcal{M}}\\
&+2\int\limits_{\mathcal{M}}\lambda(v-\psi)
 d\mu ,
\end{aligned}
$$
and the constants $C_{F_{\Omega_+}}$, $C_{F_{\Omega_-}}$, and $C_{Tr_{\mathcal{M}}}$ are the same as in Lemma~\ref{lemma2.2}.
\end{cor}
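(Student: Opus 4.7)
The plan is to chain together the basic error identity (\ref{27-a}), the triangle inequality (\ref{triangle_ineq}), Lemma~\ref{lemma2.2}, and two applications of Young's inequality of the form $(a+b)^2\leqslant(1+\beta)a^2+(1+\beta^{-1})b^2$ with parameters $\beta_1$ and $\beta_2$ respectively. Since (\ref{estimate1}) already gives a sum of four non-negative terms, the goal here is simply to square it in a way that splits cleanly between an $\mathfrak{M}_1$-piece (controlled by $v$, $q^*$, and its divergences) and an $\mathfrak{M}_2$-piece (controlled by the trace mismatch and the obstacle residual).

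The first step is to start from (\ref{27-a}),
\begin{equation*}
\|\nabla(v-u)\|_\Omega^2\leqslant \|\nabla v-y^*\|_\Omega^2+2\int_{\mathcal{M}}\lambda(v-\psi)\,d\mu,
\end{equation*}
valid for any $y^*\in Q^*_{\lambda,\mathcal{M}}$, and then apply the triangle inequality (\ref{triangle_ineq}) together with Young's inequality with parameter $\beta_1>0$ to obtain
\begin{equation*}
\|\nabla v-y^*\|_\Omega^2\leqslant (1+\beta_1)\|\nabla v-q^*\|_\Omega^2+(1+\beta_1^{-1})\|q^*-y^*\|_\Omega^2.
\end{equation*}
Since $y^*\in Q^*_{\lambda,\mathcal{M}}$ is arbitrary and the remaining dependence on $y^*$ sits only in the last term, I would take the infimum over $y^*\in Q^*_{\lambda,\mathcal{M}}$ in that term and invoke Lemma~\ref{lemma2.2} to majorize it by $A+B$, where
\begin{equation*}
A:=C_{F_{\Omega_+}}\|\text{\rm div}\,q^*\|_{\Omega_+}+C_{F_{\Omega_-}}\|\text{\rm div}\,q^*\|_{\Omega_-},\qquad B:=C_{Tr_{\mathcal{M}}}\|\lambda-[q^*\cdot\mathbf{n}]\|_{\mathcal{M}}.
\end{equation*}

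The second step is to square $A+B$ by Young's inequality with the second parameter $\beta_2>0$, giving $(A+B)^2\leqslant(1+\beta_2)A^2+(1+\beta_2^{-1})B^2$, and then insert this back into the previous inequality. Multiplying by $(1+\beta_1^{-1})$ distributes this splitting across the two factors, and regrouping the terms yields exactly $\mathfrak{M}_1(v,q^*,\beta_1,\beta_2)$ (collecting $(1+\beta_1)\|\nabla v-q^*\|_\Omega^2$ with $(1+\beta_1^{-1})(1+\beta_2)A^2$) and $\mathfrak{M}_2(v,q^*,\beta_1,\beta_2,\lambda,\psi)$ (collecting $(1+\beta_1^{-1})(1+\beta_2^{-1})B^2$ with the obstacle residual $2\int_{\mathcal{M}}\lambda(v-\psi)\,d\mu$ carried over unchanged from (\ref{27-a})).

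There is no real analytical obstacle here; the proof is a structural bookkeeping exercise that reduces to combining already-established estimates. The only subtle point worth stating cleanly is the order of operations: Young with $\beta_1$ must be applied \emph{before} taking the infimum over $y^*$, because only the norm $\|q^*-y^*\|_\Omega$ depends on $y^*$ after the split, so the infimum can then pass inside; afterwards Young with $\beta_2$ is applied to the sum given by Lemma~\ref{lemma2.2}. Once the ordering is fixed, the identification with $\mathfrak{M}_1$ and $\mathfrak{M}_2$ is immediate and the corollary follows.
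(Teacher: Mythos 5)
Your argument is correct and matches the route the paper intends: combine (\ref{27-a}) with the triangle inequality (\ref{triangle_ineq}), apply Young's inequality with parameter $\beta_1$ to split off $\|q^*-y^*\|_\Omega^2$, take the infimum over $y^*\in Q^*_{\lambda,\mathcal M}$ (only that term depends on $y^*$) and bound it by Lemma~\ref{lemma2.2}, then apply Young's inequality with parameter $\beta_2$ to the resulting sum and regroup. The paper states the corollary precisely as a consequence of (\ref{27-a}), (\ref{triangle_ineq}), and Young's inequalities, so there is no methodological difference.
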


The majorant (\ref{majorant_with_lambda_2}) contains parameters and free functions that can be selected arbitrarily in the respective sets. Below we deduce a new form  of  (\ref{majorant_with_lambda_2})  where the function $\lambda$ will be chosen in the optimal way.

First, we optimize $\mathfrak{M}_2$ with respect to $\lambda$.
The respective minimization problem is reduced to
$$
\inf\limits_{\lambda \in \Lambda} \mathfrak{M}_2=c^{-1}_{\beta}\inf\limits_{\lambda \in \Lambda}\int\limits_{\mathcal{M}}\left( C^2_{Tr_{\mathcal{M}}} (\lambda-  \left[q^* \cdot \mathbf{n}\right])^2 +  2\lambda c_{\beta} (v-\psi)\right) d\mu, 
$$
where $c_{\beta}:=\beta_1\beta_2(1+\beta_1)^{-1}(1+\beta_2)^{-1}$.
The corresponding Euler-Lagrange equation has the form
$$
2C^2_{Tr_{\mathcal{M}}}(\lambda-  \left[q^* \cdot \mathbf{n}\right])+2c_{\beta}(v-\psi)=0.
$$
From here,
taking into account the condition $\lambda \geqslant 0$ a.e. on $\mathcal{M}$,
we find the minimizer
$$
 \overline{\lambda}=\left\{
\begin{aligned}
\left[q^*\cdot \mathbf{n}\right] -& c_{\beta}C^{-2}_{Tr_{\mathcal{M}}}(v-\psi), \quad \ \text{if}\ \left[q^*\cdot \mathbf{n}\right] \geqslant 
c_{\beta}C^{-2}_{Tr_{\mathcal{M}}}(v-\psi),\\
&0, \qquad \qquad  \qquad \quad \ \text{if} \ \left[q^*\cdot \mathbf{n}\right] <
c_{\beta}C^{-2}_{Tr_{\mathcal{M}}}(v-\psi).
\end{aligned}
\right.
$$
Plugging $ \overline{\lambda}$ in the right-hand side of (\ref{majorant_with_lambda_2})
implies  the following result.
\begin{thm}
For any $v \in \mathbb{K}$, 
\begin{equation}
\|\nabla (v-u)\|^2_{\Omega} \leqslant 
\mathfrak{M}_1(v,q^*, \beta_1, \beta_2)+\mathfrak{M}_3(v,q^*, \beta_1, \beta_2, \psi),
\label{majorant_without_lambda}
\end{equation}
where $q^*$ is an arbitrary function in $H(\Omega_{\pm}, \text{\rm{div}})$,
$\beta_1$ and $\beta_2 $ are arbitrary nonegative numbers,
 $\mathfrak{M}_1$ is the same as in (\ref{majorant_with_lambda_2}),
$$
\mathfrak{M}_3(v,q^*, \beta_1, \beta_2,\psi):= \int\limits_{\mathcal{M}}\rho(v,q^*, c_{\beta}, \psi)
 d\mu ,
$$
and
$$
\rho(v,q^*, c_{\beta}, \psi):= \left\{
\begin{aligned}
(v-\psi) \big( 2 \left[q^*\cdot \mathbf{n}\right] -\frac{c_{\beta}}{C^{2}_{Tr_{\mathcal{M}}}}(v-\psi) \big), \  & \text{if} \ \left[q^*\cdot \mathbf{n}\right] \geqslant
\frac{c_{\beta}}{C^2_{Tr_{\mathcal{M}}}}(v-\psi), \\
\frac{C^2_{Tr_{\mathcal{M}}}}{c_{\beta}}\left[q^*\cdot \mathbf{n}\right]^2 , \qquad \qquad \ \ \  \  \quad & \text{if} \ \left[q^*\cdot \mathbf{n}\right] <
\frac{c_{\beta}}{C^2_{Tr_{\mathcal{M}}}}(v-\psi).
\end{aligned}
\right.
$$
\end{thm}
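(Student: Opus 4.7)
The plan is to take inequality (\ref{majorant_with_lambda_2}) and minimize its right-hand side over $\lambda \in \Lambda$. The term $\mathfrak{M}_1$ is independent of $\lambda$, so the task reduces to minimizing $\mathfrak{M}_2(v,q^*,\beta_1,\beta_2,\lambda,\psi)$ alone. Both this functional and the constraint $\lambda\geq 0$ a.e.\ on $\mathcal{M}$ act pointwise, so the minimization decouples into a one-dimensional constrained convex quadratic problem at a.e.\ $x \in \mathcal{M}$; this is why one should expect the piecewise minimizer $\overline{\lambda}$ described in the theorem.

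Next, I would rewrite $\mathfrak{M}_2$ by factoring out a constant using the identity $(1+\beta_1^{-1})(1+\beta_2^{-1}) = c_\beta^{-1}$:
\begin{equation*}
\mathfrak{M}_2 = c_\beta^{-1} \int_{\mathcal{M}} \Big( C^2_{Tr_{\mathcal{M}}}\bigl(\lambda - [q^*\cdot\mathbf{n}]\bigr)^2 + 2 c_\beta \lambda (v-\psi) \Big)\,d\mu.
\end{equation*}
At a.e.\ $x \in \mathcal{M}$, differentiating the integrand in $\lambda$ and equating to zero yields the unconstrained minimizer $[q^*\cdot\mathbf{n}] - c_\beta C^{-2}_{Tr_{\mathcal{M}}}(v-\psi)$; by strict convexity in $\lambda$, its projection onto $[0,\infty)$ supplies the constrained pointwise minimizer, which is exactly the function $\overline{\lambda}$ defined just before the theorem. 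Measurability and $L^2$-integrability of $\overline{\lambda}$ (so that $\overline{\lambda} \in \Lambda$) follow from the corresponding properties of $[q^*\cdot\mathbf{n}]$ and of the trace $v-\psi$ on $\mathcal{M}$.

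Finally, I would substitute $\overline{\lambda}$ back into the integrand and simplify in the two regimes. When $[q^*\cdot\mathbf{n}] \geq c_\beta C^{-2}_{Tr_{\mathcal{M}}}(v-\psi)$, the two quadratic contributions partially cancel, leaving $(v-\psi)\bigl(2[q^*\cdot\mathbf{n}] - c_\beta C^{-2}_{Tr_{\mathcal{M}}}(v-\psi)\bigr)$; when $\overline{\lambda}=0$, only the first term survives and yields $c_\beta^{-1} C^2_{Tr_{\mathcal{M}}}[q^*\cdot\mathbf{n}]^2$. These match the two branches of $\rho$, so $\mathfrak{M}_2(v,q^*,\beta_1,\beta_2,\overline{\lambda},\psi) = \mathfrak{M}_3(v,q^*,\beta_1,\beta_2,\psi)$. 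Plugging $\lambda = \overline{\lambda}$ into (\ref{majorant_with_lambda_2}) then gives (\ref{majorant_without_lambda}). The only mildly delicate step is the algebraic simplification in the first regime; everything else is a routine pointwise convex minimization together with the observation that no term of $\mathfrak{M}_1$ is affected.
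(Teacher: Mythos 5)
Your proposal is correct and follows essentially the same route as the paper: separate out the $\lambda$-independent $\mathfrak{M}_1$, factor out $c_\beta^{-1}$ from $\mathfrak{M}_2$ using $(1+\beta_1^{-1})(1+\beta_2^{-1})=c_\beta^{-1}$, solve the pointwise Euler--Lagrange equation (equivalently, minimize the quadratic in $\lambda$ pointwise), project the unconstrained stationary point onto $[0,\infty)$ to obtain $\overline{\lambda}$, and substitute back. The only addition beyond what the paper writes out is your (correct and welcome) remark that $\overline{\lambda}$ is measurable and square-integrable, so it actually lies in $\Lambda$.
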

\vspace{0.2cm}

It is clear that the quantities $\mathfrak{M}_1$ and $\mathfrak{M}_3$ are always  nonnegative and the functional $\mathfrak{M}_4~:=~\mathfrak{M}_1~+\mathfrak{M}_3$ satisfies for any $\beta_1, \beta_2>0$ the relation
$$
\mathfrak{M}_4(u, \nabla u, \beta_1, \beta_2, \psi)=0.
$$
On the other hand, if $\mathfrak{M}_4(v, q^*, \beta_1, \beta_2, \psi)=0$ then $v=u$ almost everywhere in $\Omega$. Moreover, in this case the conditions
\begin{equation}
\begin{aligned}
q^*&=\nabla u\quad \text{a.e. in} \ \Omega,\\
\Delta u&=0 \qquad \text{a.e. in} \ \Omega_{\pm},\\
(u-\psi)\left[\nabla u \cdot \mathbf{n}\right]&=0 \qquad \text{a.e. on}\ \mathcal{M}
\end{aligned}
\label{xxx}
\end{equation}
hold true. We point out that the third equality in (\ref{xxx}) is provided by strict positivity of the factor $2 \left[q^*\cdot \mathbf{n}\right] -\frac{c_{\beta}}{C^{2}_{Tr_{\mathcal{M}}}}(v-\psi)$ in definition of $\rho$. Therefore, one can conclude that the majorant $\mathfrak{M}_4$ vanishes if and only if $v=u$ and $q^*=\nabla u$ almost everywhere in $\Omega$.

\begin{rem}
\label{rem3a}
Applying the same arguments as in Remark \ref{rem2}, we can prove that the majorant 
$\mathfrak{M}_4(v,q^*, \beta_1, \beta_2, \psi)$ is sharp if
 ${\mathcal M}^u_\psi\subset {\mathcal M}^v_\psi$.
 \end{rem}

One can also  prove that for any $\beta_1,\beta_2>0$,  the functional $\mathfrak{M}_4(v,q^*, \beta_1, \beta_2, \psi)$ possesses necessary continuity properties with respect to the first and second arguments. Thus, 
$$
\mathfrak{M}_4(v_k, q^*_k, \beta_1, \beta_2,\psi) \to 0
$$
if $v_k \to u$ in $\mathbb{K}$ and $q^*_k \to \nabla u$ in $L^2(\Omega_{\pm})$ and  $\left[q_k^*\cdot \mathbf{n}\right] \to \left[ \nabla u \cdot \mathbf{n}\right]$ in $L^2(\mathcal{M})$. Thus, taking into account Remark~\ref{rem3a}, we conclude that 
 the estimate (\ref{majorant_without_lambda}) has no gap between the left and right hand sides 
 and we can always select the parameters of $\mathfrak{M}_4$ such that it is arbitrary close to the energy norm of the error.


\section{Estimates with explicit constants}
It should be also noted that for complicated domains the constants $C_{F_{\Omega_{\pm}}}$ and $C_{Tr_{\mathcal{M}}}(\Omega_{\pm})$ entering above derived estimates (\ref{estimate1})-(\ref{majorant_without_lambda})
may be unknown. In this case, we need to find guaranteed and realistic
upper bounds of them. Depending on a particular
domain, this task may be fairly easy or very difficult.
It is therefore of interest to look at other variants of Lemma 
\ref{lemma2.2}, which operates with  different constants.
In this section, we establish another estimate based
on the  Poincar\'{e}  inequlity for functions
having zero mean values in $\Omega_\pm$ and on the
so--called "sloshing" inequality for functions
with zero mean traces on $\mathcal{M}$.
As a result, we obtain estimates of the distance to the minimizer
$u$ containing the constants which are either explicitly known
or easily definable.

Henceforth, we denote by 
$\mean{w}_\omega$ the mean value of $w$ on the set $\omega$.
In view of the Poincare inequality
\begin{equation}
\label{Poincare1}
\|w\|_{\Omega_\pm}\leq C_{P_{\Omega_{\pm}}} \|\nabla w\|_{\Omega_\pm}\qquad
\forall w\in \widetilde H^1(\Omega_\pm):=\left\{w\in H^1(\Omega_\pm)\,:\,\mean{w}_{\Omega_\pm}=0\right\}.
\end{equation}
Similar inequalities hold for the  functions defined in $\Omega_+$
and $\Omega_-$  having zero mean values
on $\mathcal M$:
\begin{equation}
\label{Poincare2}
\|w\|_{\mathcal M}\leq C_{P_{\mathcal{M}}} (\Omega_{\pm}) \|\nabla w\|_{\Omega_\pm}\qquad
\forall w\in \widetilde H^1_{\mathcal M}(\Omega_\pm):=
\left\{w\in H^1(\Omega_\pm)\,:\,\mean{w}_{\mathcal M}=0\right\}.
\end{equation}


\begin{lem} \label{lemma_3.1}
Let $q^*\in H(\Omega_{\pm}, \text{\rm{div}})$ and $\lambda \in \Lambda$ satisfy  the following additional conditions:
\begin{equation}\label{zero_mean}
\int\limits_{\Omega_+}\text{\rm{div}}\,q^*dx=\int\limits_{\Omega_-}\text{\rm{\rm{div}}}\,q^*dx=0\qquad {\rm and}\qquad
\int\limits_{\mathcal{M}} (\lambda -\left[q^*\cdot \mathbf{n}\right]) d\mu =0.
\end{equation} 
 Then, for any $\alpha \in [0,1]$, we have
\begin{equation} \label{with_Poincare}
\inf\limits_{y^*\in Q^*_{\lambda,\mathcal{M}}}\|q^*-y^*\|^{2}_{\Omega} \leqslant 
(\mathfrak{D}_-(q^*)+\alpha \mathfrak{m}_-(q^*))^2+(\mathfrak{D}_+(q^*)+(1-\alpha)\mathfrak{m}_+(q^*))^2,
\end{equation}
where $
\mathfrak{D}_{\pm}(q^*):=C_{P_{\Omega_{\pm}}}\|\text{\rm{div}}\,q^*\|_{\Omega_{\pm}}
$ and
$\mathfrak{m}_{\pm}(q^*)=C_{P_{\mathcal{M}}}(\Omega_{\pm})\|\lambda -[q^*\cdot \mathbf{n}]\|_{\mathcal{M}}$.
\end{lem}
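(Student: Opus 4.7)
The proof will parallel that of Lemma \ref{lemma2.2}. First, I will reuse exactly the same auxiliary variational problem $(\mathcal{P}_{q^*})$ to produce the minimizer $w:=w_{\lambda,q^*}\in H^1_0(\Omega)$. The argument already given in Lemma \ref{lemma2.2} shows that $\tau^*:=\nabla w+q^*$ lies in $Q^*_{\lambda,\mathcal{M}}$, so
$$
\inf\limits_{y^*\in Q^*_{\lambda,\mathcal{M}}}\|q^*-y^*\|_\Omega\leqslant\|\nabla w\|_\Omega,
$$
and it suffices to bound $\|\nabla w\|^2_\Omega$ by the right-hand side of (\ref{with_Poincare}). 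Testing identity (\ref{identity1}) with $\widetilde w=w$ gives
$$
\|\nabla w\|^2_\Omega=\int\limits_{\Omega_+}w\,\text{\rm{div}}\,q^*\,dx+\int\limits_{\Omega_-}w\,\text{\rm{div}}\,q^*\,dx+\int\limits_{\mathcal{M}}(\lambda-[q^*\cdot\mathbf{n}])\,w\,d\mu.
$$

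The two hypotheses in (\ref{zero_mean}) are exactly what allows replacing $w$ by mean-free functions in the corresponding integrals. Since $\int\limits_{\Omega_\pm}\text{\rm{div}}\,q^*\,dx=0$, I can substitute $w-\mean{w}_{\Omega_\pm}$ for $w$ in the first two integrals; the Poincar\'e inequality (\ref{Poincare1}) then gives $\bigl|\int\limits_{\Omega_\pm}w\,\text{\rm{div}}\,q^*\,dx\bigr|\leqslant\mathfrak{D}_\pm(q^*)\,\|\nabla w\|_{\Omega_\pm}$. Similarly, as $w\in H^1(\Omega)$ has a single well-defined trace on $\mathcal{M}$ and $\int\limits_\mathcal{M}(\lambda-[q^*\cdot\mathbf{n}])\,d\mu=0$, I can subtract $\mean{w}_\mathcal{M}$ inside the $\mathcal{M}$-integral and estimate the remainder by the sloshing inequality (\ref{Poincare2}) applied on either $\Omega_+$ or $\Omega_-$.

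The free parameter $\alpha$ in the statement enters precisely here: rather than picking one of the two available sloshing bounds, I split the $\mathcal{M}$-integral as $\alpha$ times itself plus $(1-\alpha)$ times itself, estimating the first copy via the $\Omega_-$-version of (\ref{Poincare2}) and the second copy via the $\Omega_+$-version. This yields
$$
\Bigl|\int\limits_\mathcal{M}(\lambda-[q^*\cdot\mathbf{n}])\,w\,d\mu\Bigr|\leqslant\alpha\,\mathfrak{m}_-(q^*)\,\|\nabla w\|_{\Omega_-}+(1-\alpha)\,\mathfrak{m}_+(q^*)\,\|\nabla w\|_{\Omega_+},
$$
and combining with the two $\Omega_\pm$-estimates gives
$$
\|\nabla w\|^2_\Omega\leqslant(\mathfrak{D}_-+\alpha\mathfrak{m}_-)\,\|\nabla w\|_{\Omega_-}+(\mathfrak{D}_++(1-\alpha)\mathfrak{m}_+)\,\|\nabla w\|_{\Omega_+}.
$$

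To conclude, I will apply the Cauchy--Schwarz inequality in $\mathbb{R}^2$ to the vectors $(\mathfrak{D}_-+\alpha\mathfrak{m}_-,\,\mathfrak{D}_++(1-\alpha)\mathfrak{m}_+)$ and $(\|\nabla w\|_{\Omega_-},\|\nabla w\|_{\Omega_+})$. Since $\|\nabla w\|^2_{\Omega_-}+\|\nabla w\|^2_{\Omega_+}=\|\nabla w\|^2_\Omega$, dividing by $\|\nabla w\|_\Omega$ and squaring produces (\ref{with_Poincare}). The main place to be careful is the bookkeeping: one must verify that each hypothesis in (\ref{zero_mean}) is precisely what is needed to introduce the correction $\mean{w}_{\Omega_\pm}$ or $\mean{w}_\mathcal{M}$ in the right integral, and that the convex-combination trick with the parameter $\alpha$ is admissible because $w$ has a single trace on $\mathcal{M}$ inherited from both sides. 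Once these points are settled, the rest of the argument is essentially a cleaner reprise of the proof of Lemma \ref{lemma2.2}.
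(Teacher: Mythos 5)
Your proposal matches the paper's own argument step for step: reuse of the auxiliary problem $(\mathcal{P}_{q^*})$ and the competitor $\tau^*=\nabla w_{\lambda,q^*}+q^*$, subtraction of the mean values $\mean{w}_{\Omega_\pm}$ and $\mean{w}_{\mathcal{M}}$ justified by (\ref{zero_mean}), the convex split of the $\mathcal{M}$-integral into $\alpha$ and $1-\alpha$ parts estimated by the two sloshing constants, and the final Cauchy--Schwarz in $\mathbb{R}^2$. The observation that $w\in H^1_0(\Omega)$ has a single trace on $\mathcal{M}$, so both copies of (\ref{Poincare2}) are admissible, is exactly the point that makes the $\alpha$-splitting work; the paper uses it tacitly.
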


\begin{proof}
We use the same arguments as in the proof of Lemma~2.3 and arrive at the identity (\ref{identity1}). In view of (\ref{zero_mean}), this identity implies the relation
$$
\begin{aligned}
\|\nabla w_{\lambda,q^*}\|^2_{\Omega}&=\int\limits_{\Omega_+}\left(w_{\lambda,q^*}-\{w_{\lambda,q^*}\}_{\Omega_+}\right)\text{\rm{div}}\,q^*dx\\
&+\int\limits_{\Omega_-}\left(w_{\lambda,q^*}-\{w_{\lambda,q^*}\}_{\Omega_-}\right)\text{\rm{div}}\,q^*dx\\
&+\int\limits_{\mathcal{M}}(\lambda-\left[q^*\cdot \mathbf{n}\right])\left(w_{\lambda,q^*}-\{w_{\lambda,q^*}\}_{\mathcal{M}}\right) d\mu.
\end{aligned}
$$
By (\ref{Poincare1}) and (\ref{Poincare2}), we obtain
\begin{eqnarray*}
&&\int\limits_{\Omega_{\pm}}\left(w_{\lambda,q^*}-\{w_{\lambda,q^*}\}_{\Omega_+}\right)\text{\rm{div}}\,q^*dx \leqslant
C_{P_{\Omega_{\pm}}}\|\nabla w_{\lambda,q^*}\|_{\Omega_{\pm}}\|\text{\rm{div}}\,q^*\|_{\Omega_{\pm}},\\
&&\int\limits_{\mathcal{M}}(\lambda-\left[q^*\cdot \mathbf{n}\right])\left(w_{\lambda,q^*}-\{w_{\lambda,q^*}\}_{\mathcal{M}}\right) d\mu 
\leqslant C_{P_{\mathcal{M}}}(\Omega_{\pm})\|\nabla w_{\lambda,q^*}\|_{\Omega_{\pm}}\|\lambda -[q^*\cdot \mathbf{n}]\|_{\mathcal{M}}.
\end{eqnarray*}
Then,
\begin{equation} \label{Poincare_constants}
\begin{aligned}
\|\nabla w_{\lambda,q^*}\|^2_{\Omega}&\leqslant \mathfrak{D}_-(q^*)\|\nabla w_{\lambda,q^*}\|_{\Omega_-}+
\mathfrak{D}_+(q^*)\|\nabla w_{\lambda,q^*}\|_{\Omega_+}+\alpha \mathfrak{m}_-(q^*)\|\nabla w_{\lambda,q^*}\|_{\Omega_-}\\
&+(1-\alpha) \mathfrak{m}_+(q^*)\|\nabla w_{\lambda,q^*}\|_{\Omega_+}\\
&\leqslant \left( (\mathfrak{D}_-(q^*)+\alpha \mathfrak{m}_-(q^*))^2+(\mathfrak{D}_+(q^*)+(1-\alpha) \mathfrak{m}_+(q^*))^2\right)^{1/2}\|\nabla w_{\lambda,q^*}\|_{\Omega}.
\end{aligned}
\end{equation}

Using (\ref{Poincare_constants})  and repeating the same arguments as at the end of the proof of Lemma~\ref{lemma2.2}, we arrive at (\ref{with_Poincare}). 
\end{proof}

The quantities $\mathfrak{D}_{\pm}(q^*)$ contain the Poincar\'{e} constants for $\Omega_{\pm}$. If these domains are convex, then due to the estimate of Payne and Weinberger (see \cite{PW60}) we know that
$$
C_{P_{\Omega_{\pm}}} \leqslant \frac{\text{diam}\, \Omega_{\pm}}{\pi}.
$$
The constants $C_{P_{\mathcal{M}}}(\Omega_{\pm})$ entering $\mathfrak{m}_{\pm}(q^*)$ are also easy to estimate. These constants are known for triangles (see \cite{NR15} and \cite{MR16}). Due to this fact,
we can easily obtain upper bounds of the constants for
a wide collection of domains.
\begin{figure}[ht]
\centering
\includegraphics[width=0.48\textwidth]{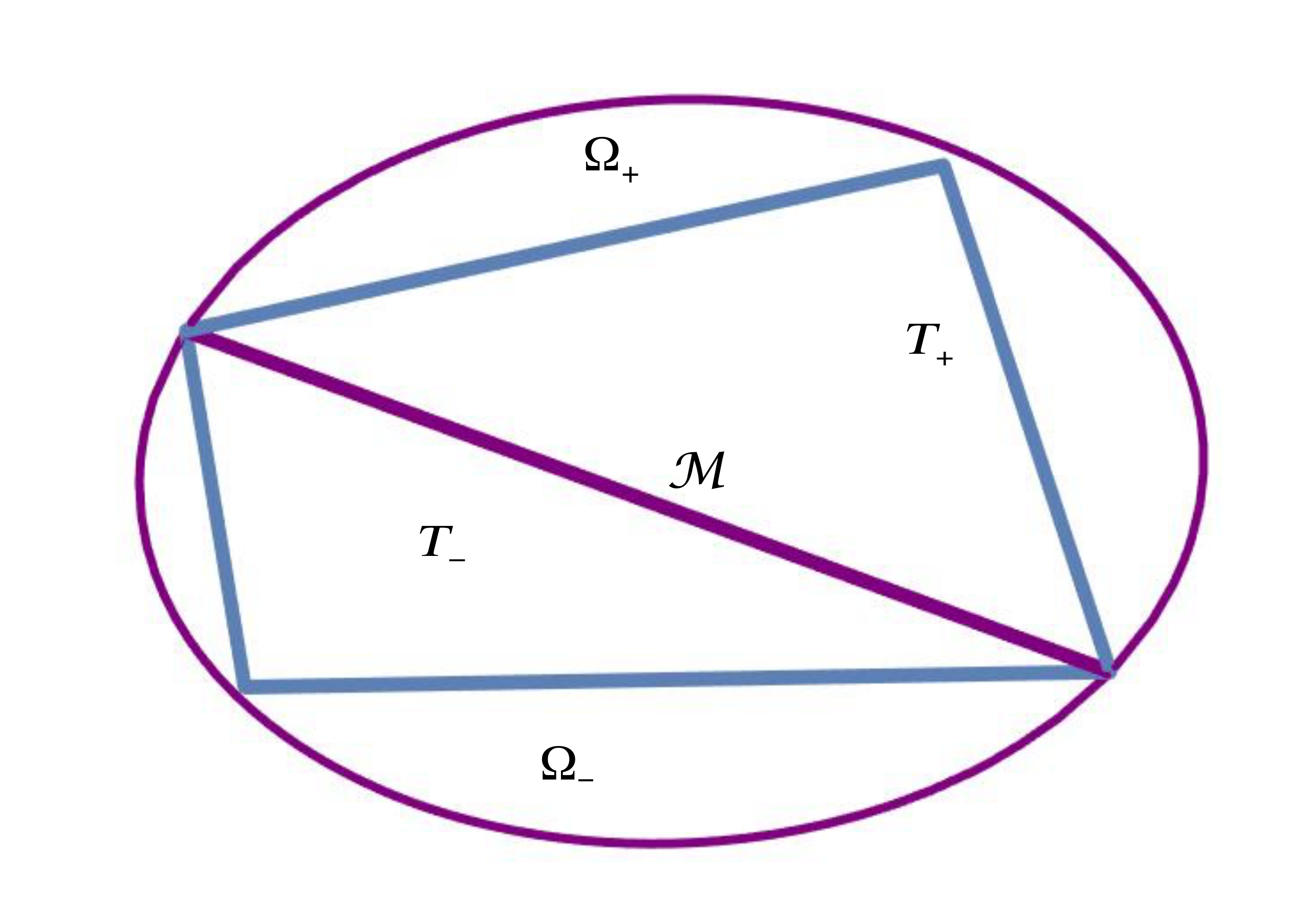}
\caption{Triangles $T_+$ and $T_-$}
\label{Triangles}
\end{figure}
\\
Indeed, let
$T_+ \subset \Omega_+$  and $\mathcal{M}$ as a face 
of this triangle (see Fig. 2).
It is clear that
$$
\|w\|_{\mathcal{M}} \leqslant C_{P_{\mathcal M }}(T_+)\|\nabla w\|_{T_+} \leqslant C_{P_{\mathcal M}}(T_+)\|\nabla w\|_{\Omega_+}\qquad
\forall w\in \widetilde H^1_{\mathcal M}(\Omega_+),
$$
and we can use the constant $C_{P_{\mathcal M}}(T_+)$ as
an upper bound of $C_{P_{\mathcal{M}}}(\Omega_{+})$.

Lemma~3.1, (\ref{27-a}), and (\ref{triangle_ineq}) yield the following majorant of the distance to $u$.

\begin{thm} \label{theorem_Poincare}
Let $u\in \mathbb{K}$ be a minimizer of  variational problem ($\mathcal{P}$). Let $q^* \in H(\Omega_+,\Omega_-, \text{\rm{div}})$, and let the conditions (\ref{zero_mean}) be satisfied.
Then, for any $v\in \mathbb{K}$, $\alpha \in [0,1]$, and $\lambda \in \Lambda$, the upper bound of error is given by the estimate
\begin{equation} \label{3}
\|\nabla (v-u)\|_{\Omega} \leqslant \mathfrak{M}_5(v, q^*, \alpha, \lambda, \psi),
\end{equation}
where 
$$
\begin{aligned}
\mathfrak{M}_5(v,q^*,\alpha,\lambda,\psi)&:=\|\nabla v -q^*\|_{\Omega}+\sqrt{2} \left(\int\limits_{\mathcal{M}}\lambda (v-\psi) d\mu \right)^{1/2}\\
&+\left((\mathfrak{D}_-(q^*)+\alpha \mathfrak{m}_-(q^*))^2+(\mathfrak{D}_+(q^*)+(1-\alpha)\mathfrak{m}_+(q^*))^2\right)^{1/2},
\end{aligned}
$$
where the functionals $\mathfrak{D}_{\pm}(q^*)$ and $\mathfrak{m}_{\pm}(q^*)$ are the same as in Lemma~\ref{lemma_3.1}.
\end{thm}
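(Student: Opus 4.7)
The plan is to combine three ingredients already established in the paper: the basic deviation estimate (2.8) from Theorem~\ref{simple_majorant}, the triangle inequality (2.16) relating $q^*$ to $y^*$, and the bound on $\inf_{y^* \in Q^*_{\lambda,\mathcal{M}}} \|q^*-y^*\|_\Omega$ provided by Lemma~\ref{lemma_3.1}. Since the hypotheses (\ref{zero_mean}) are exactly what Lemma~\ref{lemma_3.1} requires, all pieces are in place and the proof is essentially a linear combination of earlier estimates.

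First, I would start from Theorem~\ref{simple_majorant}: for any $\lambda \in \Lambda$ and $y^* \in Q^*_{\lambda,\mathcal{M}}$,
\[
\|\nabla(v-u)\|_\Omega^2 \leqslant \|\nabla v - y^*\|_\Omega^2 + 2\int_{\mathcal{M}} \lambda(v-\psi)\,d\mu.
\]
Taking the square root and using the elementary bound $\sqrt{a^2+b^2} \leqslant a+b$ valid for $a,b \geqslant 0$ (with $a=\|\nabla v - y^*\|_\Omega$ and $b^2 = 2\int_{\mathcal{M}} \lambda(v-\psi)\,d\mu$, noting that the second integrand is nonnegative since $\lambda \geqslant 0$ and $v \geqslant \psi$ on $\mathcal{M}$ because $v \in \mathbb{K}$), I obtain
\[
\|\nabla(v-u)\|_\Omega \leqslant \|\nabla v - y^*\|_\Omega + \sqrt{2}\left(\int_{\mathcal{M}} \lambda(v-\psi)\,d\mu\right)^{1/2}.
\]

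Next, I would apply the triangle inequality (\ref{triangle_ineq}) to split off $q^*$:
\[
\|\nabla v - y^*\|_\Omega \leqslant \|\nabla v - q^*\|_\Omega + \|q^* - y^*\|_\Omega.
\]
Since the left-hand side of the original bound does not depend on $y^*$, I can take the infimum over $y^* \in Q^*_{\lambda,\mathcal{M}}$ on the right and invoke Lemma~\ref{lemma_3.1} (which applies precisely because $q^*$ and $\lambda$ satisfy (\ref{zero_mean})) to conclude
\[
\inf_{y^* \in Q^*_{\lambda,\mathcal{M}}} \|q^* - y^*\|_\Omega \leqslant \bigl((\mathfrak{D}_-(q^*)+\alpha \mathfrak{m}_-(q^*))^2+(\mathfrak{D}_+(q^*)+(1-\alpha)\mathfrak{m}_+(q^*))^2\bigr)^{1/2}.
\]
Assembling the three estimates yields exactly $\mathfrak{M}_5(v,q^*,\alpha,\lambda,\psi)$ on the right-hand side, completing the proof.

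I do not anticipate any genuine obstacle: the argument is a bookkeeping composition of (\ref{27-a}), (\ref{triangle_ineq}), and (\ref{with_Poincare}). The only subtle point worth verifying carefully is that the elementary inequality $\sqrt{a^2+b^2}\leqslant a+b$ is the appropriate tool to separate the boundary term from the bulk term so that the resulting bound takes the additive form of $\mathfrak{M}_5$ rather than a squared form; this is legitimate precisely because both summands under the square root in (\ref{27-a}) are nonnegative for admissible $v$ and $\lambda$.
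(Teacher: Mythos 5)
Your proposal matches the paper's argument: the theorem is obtained by combining the basic bound (\ref{27-a}) from Theorem~\ref{simple_majorant}, the triangle inequality (\ref{triangle_ineq}), and Lemma~\ref{lemma_3.1}, exactly as you describe, with the elementary inequality $\sqrt{a^2+b^2}\leqslant a+b$ separating the two terms. No gaps; this is the same route as the paper.
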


As in Section 2, it is easy to see that the majorant $\mathfrak{M}_5$  is a nonnegative function of its arguments, which  vanishes if and only if
$v=u$ and $q^*=\nabla u$ a. e. in $\Omega$, and $\lambda=\left[\frac{\partial u}{\partial{\mathbf{n}}}\right]$ a. e. on $\mathcal{M}$.

\begin{rem}
\label{rem4}
The majorant 
$\mathfrak{M}_5(v,q^*,\alpha,\lambda,\psi)$ is sharp if
 ${\mathcal M}^u_\psi\subset {\mathcal M}^v_\psi$. The proof is based on the same arguments as in Remark \ref{rem2}.
\end{rem}

\begin{rem} 
\label{Remark 1}
Other forms of the majorant arise if the conditions (\ref{zero_mean}) 
are satisfied only partially. For example, if  only the condition
\begin{equation} \label{partial_zero_mean}
\int\limits_{\mathcal{M}}(\lambda -\left[q^*\cdot \mathbf{n}\right] )d\mu =0
\end{equation}
is satisfied, then the estimate (\ref{3}) holds true for any $q^* \in H(\Omega_{\pm}, \text{\rm{div}})$ satisfying (\ref{partial_zero_mean}), where the
functionals $\mathcal{D}_{-}(q^*)$ and $\mathcal{D}_{+}(q^*)$ in $\mathfrak{M}_3$ are replaced by
$C_{F_{\Omega_{-}}}\|\text{\rm{div}}\,q^*\|_{\Omega_{-}}$
and $C_{F_{\Omega_{+}}}\|\text{\rm{div}}\,q^*\|_{\Omega_{+}}$, respectively. 
This version of the estimate is used in the examples considered in Section~5.
\end{rem}

Obviously, if $\mathfrak{m}_+(q^*)=\mathfrak{m}_-(q^*)=0$ then the parameter $\alpha$ in (\ref{3}) has no influence to the majorant value and it  can be chosen arbitrarily in $[0,1]$. Otherwise,
  we can define  $\alpha$  in the optimal way by solving the
minimization problem
$$
\inf\limits_{\alpha \in [0,1]} \left\{ (\mathfrak{D}_-(q^*)+\alpha\mathfrak{m}_-(q^*))^2+(\mathfrak{D}_+(q^*)+(1-\alpha)\mathfrak{m}_+(q^*))^2\right\},
$$
which yields the best value
\begin{equation*}
\alpha^*:=\left\{\begin{array}{ll}
&\overline{\alpha}, \quad {\rm if}\;0\leqslant \overline{\alpha} \leqslant 1,\\
&0, \quad {\rm if} \; \overline{\alpha}<0,\\
&1, \quad {\rm if}\; \overline{\alpha}>1,
\end{array}
\right.\qquad{\rm where}\qquad \overline{\alpha}:=\frac{\mathfrak{m}^2_+(q^*)+\mathfrak{D}_+(q^*)\mathfrak{m}_+
(q^*)-\mathfrak{D}_-(q^*)\mathfrak{m}_-(q^*)}{\mathfrak{m}^2_+(q^*)+\mathfrak{m}^2_-(q^*)}.
\end{equation*}

\section{The scalar Signorini problem}

A problem close to $(\mathcal{P})$ arises if $\mathcal{M}$ 
coincides with a part of $\partial\Omega$. In this case, the functional
(\ref{Dint}) is minimized over  the set
$$
\mathbb{K}_{\mathcal{S}}=\left\lbrace v \in H^{1}\left( \Omega\right) : \quad v \geqslant \psi\ \text{on}\ \mathcal{M}, \quad v = \varphi\ \text{on}\ \partial\Omega \setminus \mathcal{M}\right\rbrace .
$$
This problem is known as the \textit{boundary thin obstacle problem} or the \textit{(scalar) Signorini problem}.

Under appropriate assumptions
 on the data of the problem $(\mathcal{S})$, the existence of the unique minimizer $u \in H^1(\Omega)$   has been proved in \cite{F63}.
The exact solution $u$ is a harmonic function in $\Omega$, 
which satisfies the so-called \textit{Signorini boundary conditions}
$$
u-\psi \geqslant 0, \qquad \frac{\partial u}{\partial \mathbf{n}} \geqslant 0, \qquad (u-\psi)\frac{\partial u}{\partial \mathbf{n}}=0\qquad
{\rm on}\quad \mathcal{M},
$$
where $\mathbf{n}$ denotes the  unit outward normal to $\partial \Omega$.

Throughout this section 
$H^1_{0,{\mathcal{S}}}(\Omega)$ denotes a subset of $H^1(\Omega)$ containing the functions with zero traces on $\partial\Omega \setminus \mathcal{M}$ and 
$$
Q^{*, \, \mathcal{S}}_{\lambda, \mathcal{M}}:=\big\lbrace y^* \in L^2(\Omega, \mathbb{R}^n) : \int\limits_{\Omega}y^* \cdot \nabla w dx=\int\limits_{\mathcal{M}}\lambda w  d\mu  \ \text{for all}\ w\in H^1_{0,{\mathcal{S}}}\left( \Omega\right) \big\rbrace .
$$
Repeating all the arguments used in the derivation of (\ref{27-a})
(where $H^1_0(\Omega)$ is replaced by $H^1_{0,{\mathcal{S}}}(\Omega)$), we conclude that the estimate
\begin{equation}
\frac{1}{2}\|\nabla (v-u)\|_{\Omega}^{2}\leqslant \frac{1}{2}\|\nabla v-y^*\|^2_{\Omega} +\int\limits_{\mathcal{M}}\lambda \left( v-\psi\right)  d\mu 
\label{27-S}
\end{equation}
holds true for all $v\in \mathbb{K}_{\mathcal{S}}$, all $\lambda \in \Lambda$, and all $y^*\in Q^{*, \, \mathcal{S}}_{\lambda, \mathcal{M}}$.

The estimate (\ref{27-S}) can be extended to a wider set of functions
by the arguments similar to those used in Sect. 2. For this purpose,
we consider an auxiliary problem ($\mathcal{P}^{\mathcal{S}}_{q^*}$):
find $w^S_{\lambda,q^*}\in H^1_{0,{\mathcal{S}}}(\Omega)$ that minimizes the functional
$$
\mathcal{J}_{q^*}(w)=\int\limits_{\Omega}\left(\frac{1}{2}|\nabla w|^2+q^*\cdot \nabla w\right)dx -\int\limits_{\mathcal{M}}\lambda w  d\mu 
$$
for a given
 $q^* \in H^{{\mathcal S}}(\Omega, \text{\rm{div}}):=\left\{q^*\in L^2(\Omega, {\mathbb R}^n)\,\mid\, {\rm div}\, q^*\in L^2(\Omega),\; \left[q^*\cdot{\bf n}\right]\in L^2({\mathcal M})\right\}$.

 By the same arguments as in Subsection 2.2, we conclude that the problem $(\mathcal{P}^{\mathcal{S}}_{q^*})$ has a unique minimizer $w_{\lambda,q^*}^S$ in $ H^1_{0,{\mathcal{S}}}(\Omega)$. In view of the respective integral identity, the function
$$\tau_{\mathcal{S}}^*(x):=\nabla w^{\mathcal{S}}_{\lambda, q^*}(x)+q^*(x)$$ belongs to the set $Q^{*,\mathcal{S}}_{\lambda, \mathcal{M}}$.
Hence 
\begin{equation}
\begin{aligned}
\inf\limits_{y^*\in Q^{*, \mathcal{S}}_{\lambda, \mathcal{M}}}\|\nabla v-y^*\|_{\Omega} &\leqslant \|\nabla v-q^*\|_{\Omega}+
\inf\limits_{y^*\in Q^{*, \mathcal{S}}_{\lambda, \mathcal{M}}}\|q^*-y^*\|_{\Omega}\\
& \leqslant \|\nabla v-q^*\|_{\Omega}+\|q^*-\tau^*_S\|_{\Omega}\\
\leqslant \|\nabla v-q^*\|_{\Omega} &+ C_{F_{\Omega}}\|\text{\rm{div}}\, q^*\|_{\Omega}+C_{Tr_{\mathcal{M}}}\|\lambda -q^*\cdot \mathbf{n}\|_{\mathcal{M}}.
\end{aligned}
\label{triangle-S}
\end{equation}
 for  any $v\in \mathbb{K}_{\mathcal{S}}$, $q^*\in H^{\mathcal S}(\Omega, \text{\rm{div}})$, and $\lambda \in \Lambda$. Here $C_{F_{\Omega}}$ and $C_{Tr_{\mathcal{M}}}$
 are constants in is the the Friedrichs and trace inequalities,
 respectively.
 
Combining (\ref{27-S}) and (\ref{triangle-S}), we find that for any $v\in \mathbb{K}_{\mathcal{S}}$ the following estimate holds
\begin{equation}
\|\nabla (v-u)\|_{\Omega} \leqslant \mathfrak{M}^{\mathcal{S}} (v,q^*, \lambda, \psi),
\label{majorant_S_with_lambda}
\end{equation}
where
\begin{multline}
\mathfrak{M}^{\mathcal{S}}(v, q^*, \lambda, \psi):=\|\nabla v-q^*\|^2_{\Omega} +\sqrt{2}
\left(\int\limits_{\mathcal{M}}\lambda(v-\psi)
 d\mu \right)^{1/2}\\
 +C_{F_{\Omega}}\|\text{\rm{div}}\, q^*\|_{\Omega}+
C_{Tr_{\mathcal{M}}}\|\lambda-  q^* \cdot \mathbf{n}\|_{\mathcal{M}},
\end{multline}
$\lambda \in \Lambda$, and $q^*$ is an arbitrary function in $H^{\mathcal S}(\Omega, \text{\rm{div}})$.
\vspace{0.2cm}

Obviously,  $\mathfrak{M}^{\mathcal{S}}(v, q^*, \lambda, \psi)\geq 0$. \ By \ the \ same \ arguments \ as \ in \ Section 2,
we prove that $\mathfrak{M}^{\mathcal{S}}(v, q^*, \lambda, \psi)= 0$  if and only if $v=u$,
$q^*=\nabla u$ a. e. in $\Omega$, and $\lambda = \frac{\partial u}{\partial \mathbf{n}}$ a. e. on $\mathcal{M}$.

Assume  that a function $q^*\in H^{\mathcal S}(\Omega, \text{\rm{div}})$ additionally  satisfies the conditions
$$
\int\limits_{\Omega} \text{\rm{div}}\, q^*dx=0 \qquad{\rm and}\qquad \int\limits_{\mathcal{M}}\left(\lambda -q^*\cdot \mathbf{n}\right) d\mu =0.
$$

Then, we obtain the following analog of the estimate derived
in 
 Section 3:
$$
\|\nabla (v-u)\|_{\Omega} \leqslant \mathfrak{M}_1^{\mathcal{S}}(v, q^*, \lambda, \psi),
$$
where 
$$
\begin{aligned}
\mathfrak{M}_1^{\mathcal{S}}(v, q^*, \lambda, \psi)&:=\|\nabla v-q^*\|^2_{\Omega} +\sqrt{2}
\left(\int\limits_{\mathcal{M}}\lambda(v-\psi)
 d\mu \right)^{1/2}\\
&\ +C_{P_{\Omega}}\|\text{\rm{div}}\, q^*\|_{\Omega}+
C_{P_{\mathcal{M}}}\|\lambda-  q^* \cdot \mathbf{n}\|_{\mathcal{M}}.
\end{aligned}
$$
Here $\lambda$ is any function from $\Lambda$ and $C_{P_{\Omega}}$ and $C_{P_{\mathcal{M}}}$ are the constants from the Poincar\'{e} type inequalities for $\Omega$ and  for  $\mathcal{M}$, respectively.
It is not difficult to show that  $\mathfrak{M}_1^{\mathcal{S}}$ is nonnegative and vanishes if and only if $v=u$ and $q^*=\nabla u$ a. e. in $\Omega$, and $\lambda = \frac{\partial u}{\partial \mathbf{n}}$ a. e. on $\mathcal{M}$.

\section{Examples}
Let $\overline{\Omega}=\overline{\Omega}_+\cup \overline{\Omega}_-$, where $\Omega_{\pm}$ are two right triangles having a common face
  $\mathcal{M}:=\{x_2=0\}$ (see Fig. 3). 
  \vskip-0.5cm
\begin{figure}[H]
\centering
\includegraphics[width=0.98\textwidth]{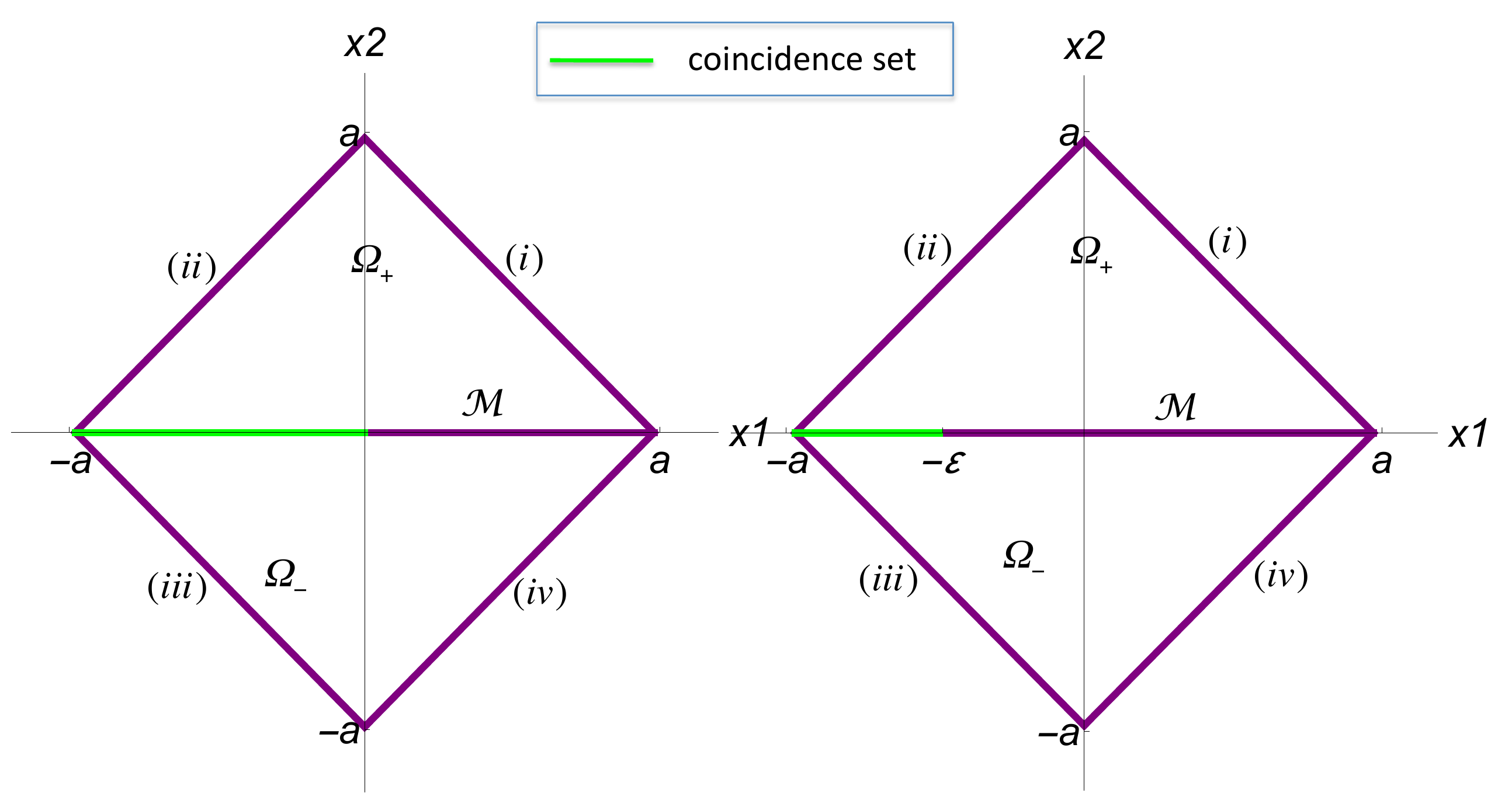}
\caption{Domains $\Omega_+$ and $\Omega_-$; coincidence sets for $u$ and $v_1$ (left) and for $v_{3,\varepsilon}$ (right)}
\label{Example-Omega}
\end{figure} 
\noindent
In this example, $\partial \Omega$ consists of four parts:
\begin{center}
\begin{tabular}{lrclr}
$(i)$ & $x_1+x_2-a=0$, &\qquad \qquad \qquad  &$(iii)$ &  $-x_1-x_2-a=0$,\\
$(ii)$ & $-x_1+x_2-a=0$, &\qquad \qquad \qquad  &$(iv)$ & $x_1-x_2-a=0$.
\end{tabular}
\end{center}

Notice that for this example, we can explicitly define the minimizer.
It is well known (see \cite{PSU12}) that for all $R >0$
$$
u(x_1,x_2)=\text{\rm{Re}} \left((x_1+i|x_2|)^{3/2}\right)
$$ is the exact solution of the thin obstacle problem in  $B_R \subset \mathbb{R}^2$ with $\mathcal{M}:=\{x_2=0\}$, and $\psi \equiv 0$, and  $\varphi=u\big|_{\partial B_R}$. Here, $B_R$ denotes the open ball with  center at the origin
 and radius $R$.  It is clear that $\Delta u=0$ in $\Omega_{\pm}$. In addition,
$$
u(x_1,0)=\left\{ \begin{aligned}
&0, \qquad \text{if}\ x_1\leqslant 0,\\
&x_1^{3/2}, \quad \text{if}\ x_1>0 
\end{aligned}\right. \qquad \text{and} \qquad 
\left[\frac{\partial u}{\partial \mathbf{n}} \right]=\left\{ \begin{aligned}
&3\sqrt{-x_1}, \quad \text{if}\ x_1<0,\\
&\ \ 0, \qquad \ \  \text{if}\ x_1\geqslant 0. 
\end{aligned}\right.
$$
Setting the boundary condition $\varphi$ on $\partial\Omega$ as the trace of $\text{\rm{Re}} \left((x_1+i|x_2|)^{3/2}\right)$ and taking $\psi \equiv 0$, we see that the restriction
of $u$ to $\Omega$ is the exact solution of the thin obstacle problem in this bounded domain  (see Fig.~\ref{Exact-Solution-bild} (left)).

\setcounter{figure}{3}
\begin{figure}[htbp]
\centering
\includegraphics[width=0.48\textwidth]{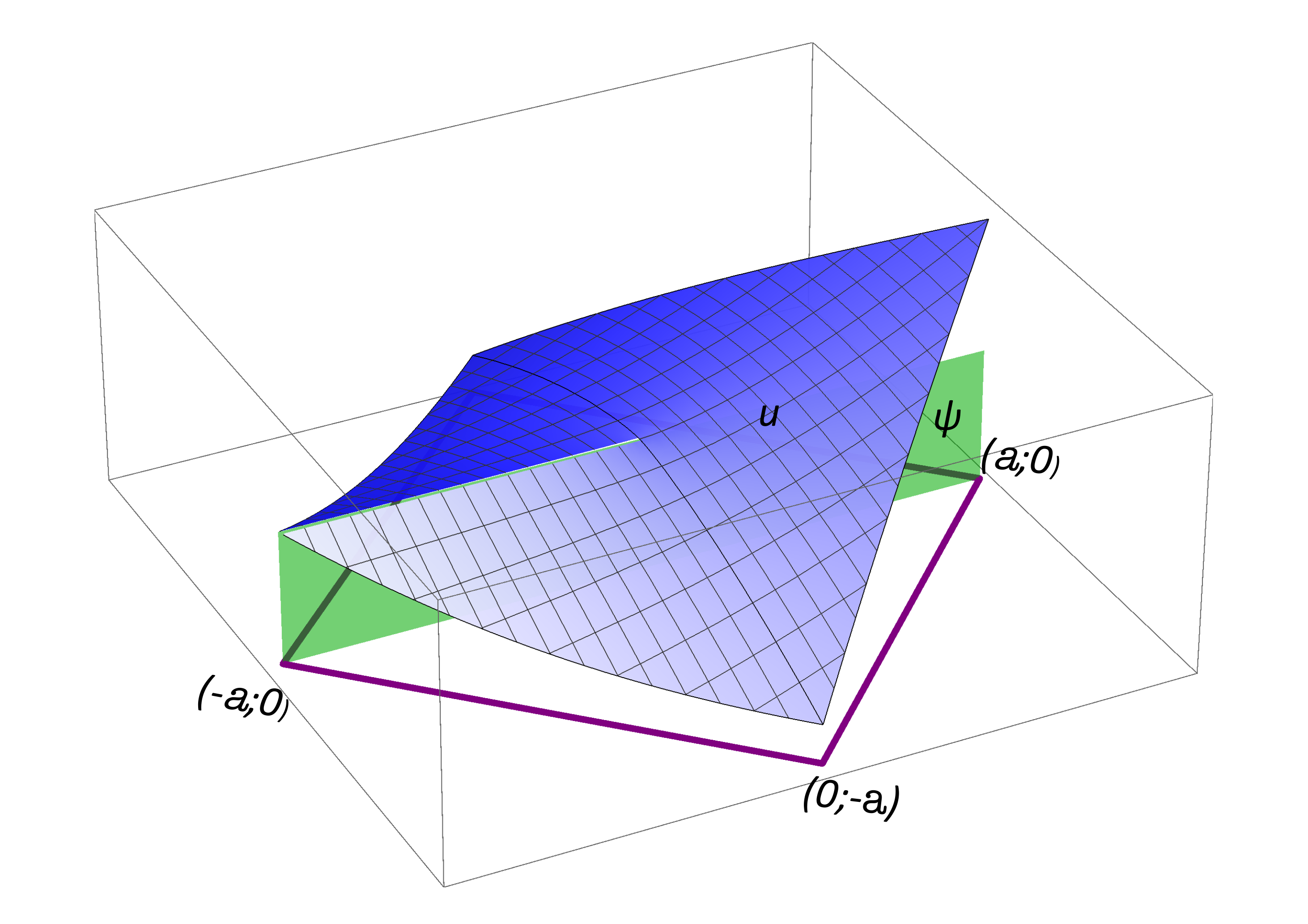}\qquad
\includegraphics[width=0.38\textwidth]{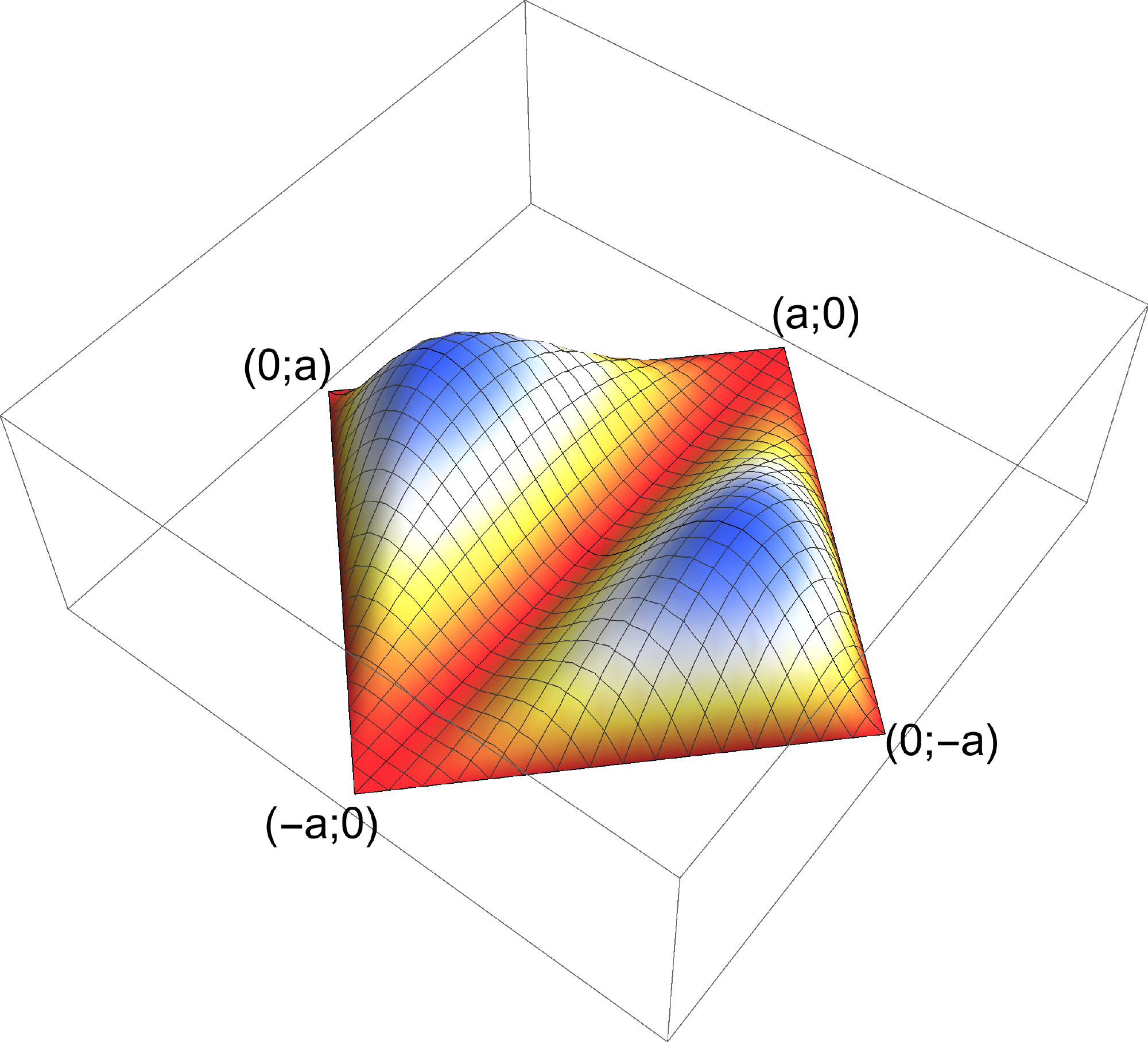}
\caption{The exact solution $u$ (left) and the function 
$v_1-u$ (right)}
\label{Exact-Solution-bild}
\end{figure}

In order to verify the performance of our estimates, we select
different functions $v$ in $\mathbb K$ and compute the distances
between $v$ and $u$.

\begin{exa} \label{example-1}

First, we define
$v=v_1$ as follows:
$$
v_1(x_1,x_2):=u(x_1,x_2)+\left\{\begin{aligned}
&x_2^2(x_2-x_1-a)(x_2+x_1-a), \quad \text{if}\ x_2 \geqslant 0,\\
&x_2^2(x_2-x_1+a)(x_2+x_1+a), \quad \text{if}\ x_2 < 0.
\end{aligned} \right.
$$
It is clear that $v_1 \in \mathbb{K}$ and $v_1 \geqslant u$ in $\Omega$ and $v_1(x_1,0)=u(x_1,0)$. Thus, $v_1$ has the same coincidence set as the exact solution $u$ (see Fig.~\ref{Example-Omega} (left) and Fig.~\ref{Exact-Solution-bild} (right)). 

By direct computations, we find  that
$\left[\frac{\partial v_1}{\partial \mathbf{n}}\right]=\left[\frac{\partial u}{\partial \mathbf{n}}\right]+12ax_2^2$, 
\begin{align*}
\Delta v_1&=\left\{\begin{aligned}
&10x_2^2-12x_2a-2x_1^2+2a^2, \quad \text{in}\ \Omega_+\\
&10x_2^2+12x_2a-2x_1^2+2a^2, \quad \text{in}\ \Omega_-
\end{aligned}\right.,
\end{align*}
and the exact error
$$
\|\nabla (v_1-u)\|_{\Omega}=\frac{4}{3}\sqrt{\frac{2}{35}}a^4.
$$

Let us set here $q^*=\nabla v_1$ and $\lambda=\left[\frac{\partial v_1}{\partial \mathbf{n}}\right]$. Computing the majorant $\mathfrak{M}\left( v_1 , \nabla v_1, \left[\frac{\partial v_1}{\partial \mathbf{n}}\right],0\right)$, defined by 
(\ref{estimate1}), 
we get
\begin{equation}
\begin{aligned}
\mathfrak{M}\left( v_1 , \nabla v_1, \left[\frac{\partial v_1}{\partial \mathbf{n}}\right],0\right)&=
\|\nabla  v_1 -
\nabla v_1\|_{\Omega}+\sqrt{2} \left( \int\limits_{-a}^a\left[\frac{\partial v_1}{\partial \mathbf{n}}\right]  v_1 dx\right)^{1/2}\\
&+C_{F_{\Omega_+}}\|\text{\rm{div}}\, \nabla v_1\|_{\Omega_+}+C_{F_{\Omega_-}}\|\text{\rm{div}}\, \nabla v_1\|_{\Omega_-}\\\
&+C_{Tr_{\mathcal{M}}}\|\left[\frac{\partial v_1}{\partial \mathbf{n}}\right]-\left[\nabla v_1\cdot \mathbf{n}\right]\|_{\mathcal{M}}\\
&=C_{F_{\Omega_+}}\|\Delta v_1\|_{\Omega_+}+C_{F_{\Omega_-}}\|\Delta v_1\|_{\Omega_-}.
\end{aligned}
\label{majorant-for-v1}
\end{equation}
\end{exa}
\medskip

\begin{rem}
Here the constants $C_{F_{\Omega_+}}$ and $C_{F_{\Omega_-}}$ are defined by the quotient type relations
$$
\inf\limits_{w\in H^{1}_{0,\pm}(\Omega_{\pm})} \frac{\|\nabla w\|_{\Omega_{\pm}}}{\|w\|_{\Omega_{\pm}}}=
\frac{1}{C_{F_{\Omega_{\pm}}}},
$$
where $H^{1}_{0,+}(\Omega_+)$ contains all $H^1$-functions vanishing on $(i)$ and $(ii)$ and $H^{1}_{0,-}(\Omega_-)$ contains all $H^1$-functions vanishing on $(iii)$ and $(iv)$.
It is easy to show that
\begin{equation}
 \label{6.1}
C_{F_{\Omega_+}}=C_{F_{\Omega_-}}=\frac{a}{\pi}.
\end{equation}
Indeed, consider the rotated triangle (see Fig. 5) and the respective eigenvalue problem
\begin{center}
\begin{tabular}{p{5.5cm}c}
{\begin{align*}
\Delta w +\varkappa w&=0 \quad  \text{in}\quad \Omega_+,\\
w&=0\quad \text{on}\quad \widetilde{x}_1=0,\\
w&=0\quad \text{on}\quad \widetilde{x}_2=0,\\
\frac{\partial w}{\partial\mathbf{n}}&=0 \quad \text{on} \quad \mathcal{M},\\
\mathcal{M}:=&\{\widetilde{x}_1+\widetilde{x}_2=a\sqrt{2}\},
\end{align*}}
&
\raisebox{-130pt}
{
\includegraphics[width=0.5\textwidth]{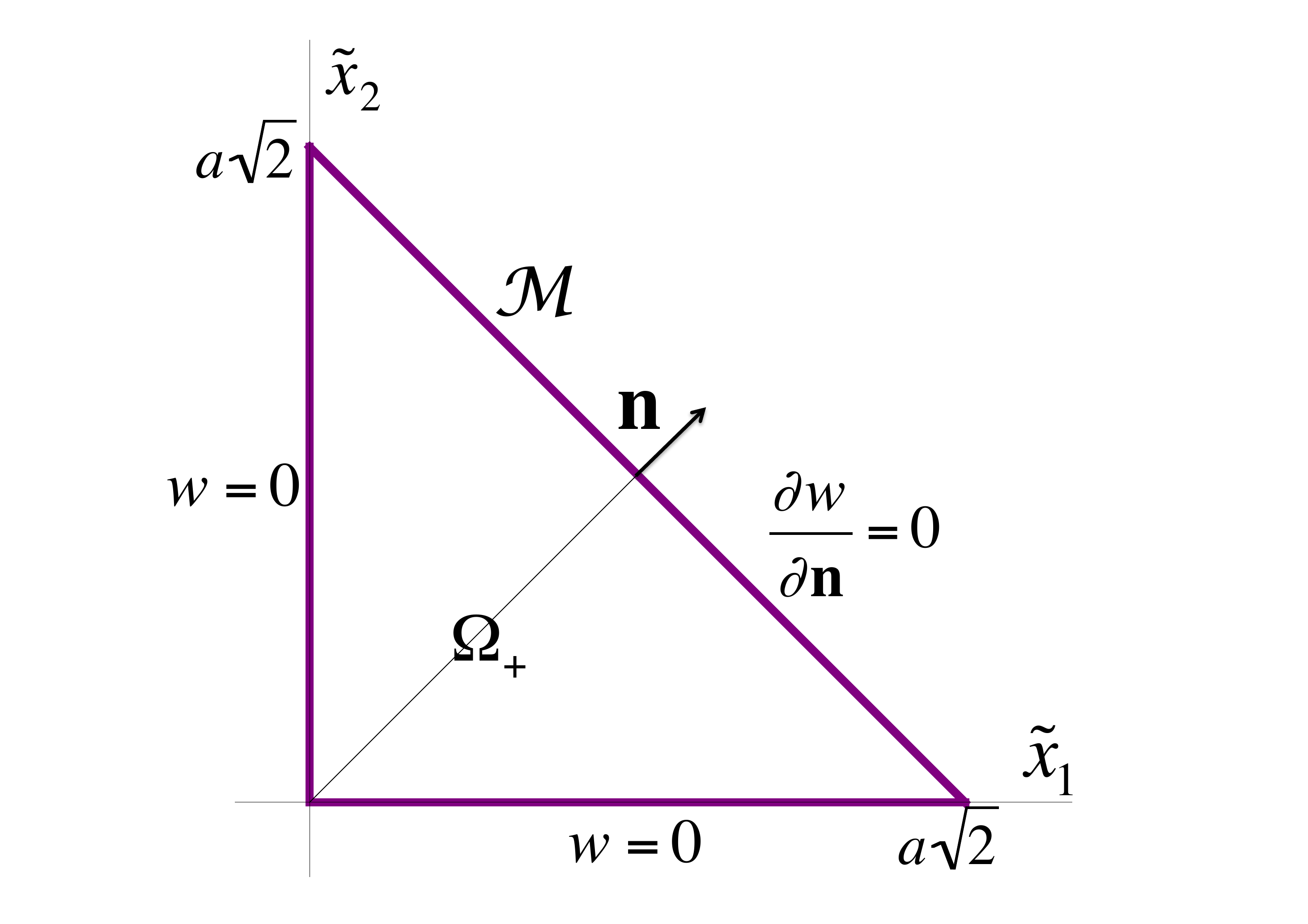}
} \\
\quad & \textsl{Figure 5: Eigenvalue problem }
\end{tabular}
\end{center}
Figure 5 is referred to the eigenvalue problem where the minimal eigenvalue corresponds to the eigenfunction
$$
\widetilde{w}=\sin{\left(\frac{\pi}{a\sqrt{2}}\widetilde{x}_1\right)}\sin{\left(\frac{\pi}{a\sqrt{2}}\widetilde{x}_2\right)}.
$$
Direct calculation of $\|\nabla \widetilde{w}\|_{\Omega_{\pm}}$ and $\|\widetilde{w}\|_{\Omega_{\pm}}$ yields (\ref{6.1}).
\label{REx1}
\end{rem}

Plugging (\ref{6.1}) into (\ref{majorant-for-v1}) yields the equality
$$
\mathfrak{M}\left( v_1 , \nabla v_1, \left[\frac{\partial v_1}{\partial \mathbf{n}}\right],0\right)=
\frac{16}{3\sqrt{5} \pi}a^4.
$$
Therefore,
the efficiency of the estimate is characterized by the value (efficiency index)
$$
1 \leqslant \frac{\mathfrak{M}\left( v_1 , \nabla v_1, \left[\frac{\partial v_1}{\partial \mathbf{n}}\right],0\right)}{\|\nabla (v_1-u)\|_{\Omega}}=\frac{4}{\pi}\sqrt{\frac{7}{2}}\approx 2.382.
$$
\vspace{0.2cm}

It should be pointed out that for $q^*=\nabla v_1$ and $\lambda=\left[\frac{\partial v_1}{\partial \mathbf{n}}\right]$ the assumption (\ref{partial_zero_mean}) is fulfilled. Moreover, $\|\left[\frac{\partial v_1}{\partial \mathbf{n}}\right]-\left[\nabla v_1\cdot \mathbf{n}\right]\|_{\mathcal{M}}=0$.
Thus, for any $\alpha \in [0,1]$ we can also compute a version of the majorant $\mathfrak{M}_3\left(v_1, \nabla v_1, \alpha, \left[\frac{\partial v_1}{\partial \mathbf{n}}\right],0\right)$, which is modified in accordance with Remark~\ref{Remark 1}. 
We will denote this modified majorant by $\mathfrak{M}'_3$. Taking into account (\ref{6.1}), we get
$$
\begin{aligned}
\mathfrak{M}'_3\left( v_1 , \nabla v_1, \alpha, \left[\frac{\partial v_1}{\partial \mathbf{n}}\right],0\right)&=
\|\nabla  v_1 -
\nabla v_1\|_{\Omega}+\sqrt{2} \left( \int\limits_{-a}^a\left[\frac{\partial v_1}{\partial \mathbf{n}}\right]  v_1 dx\right)^{1/2}\\
&+\left[C^2_{F_{\Omega_-}}\|\text{\rm{div}}\, \nabla v_1\|^2_{\Omega_-}+
C^2_{F_{\Omega_+}}\|\text{\rm{div}}\, \nabla v_1\|^2_{\Omega_+}\right]^{1/2}\\
&=\frac{a}{\pi}\left[ \|\Delta v_1\|^2_{\Omega_-}+\|\Delta v_1\|^2_{\Omega_+}\right]^{1/2}=\frac{8\sqrt{2}}{3\sqrt{5}\pi}a^4.
\end{aligned}
$$
Hence we have better efficiency index
$$
1 \leqslant \frac{\mathfrak{M}'_3\left(v_1, \nabla v_1, \alpha, \left[\frac{\partial v_1}{\partial \mathbf{n}}\right],0\right)}{\|\nabla (v_1-u)\|_{\Omega}}=\frac{2}{\pi}\sqrt{7}\approx 1.684.
$$

Finally, we notice  that in view of Remark~\ref{rem3}, the majorant $\mathfrak{M}$ is sharp for $q^*=\nabla u$ and $\lambda=\left[\frac{\partial u}{\partial \mathbf{n}}\right]$, i.e., 
$$
\frac{\mathfrak{M}(v_1, \nabla u, \left[\frac{\partial u}{\partial \mathbf{n}}\right],0)}{\|\nabla (v_1-u)\|_{\Omega}}=1.
$$
\medskip

\begin{exa} \label{example-2}
Consider now another function $v=v_2$, where
$$
 v_2 (x_1,x_2):=u(x_1,x_2)+(x_1+x_2-a)(x_2-x_1-a)(x_1+x_2+a)(x_2-x_1+a).
$$
Obviously,   $ v_2  > u$ in $\Omega$.  Hence $ v_2  \in \mathbb{K}$ and the respective coincidence set is empty.

Moreover, we have $\Delta  v_2 =8(x_1^2+x_2^2-a^2)$ in $\Omega_{\pm}$, and 
\begin{equation} \label{6.3}
\left[\frac{\partial{ v_2 }}{\partial \mathbf{n}}\right]=\left[\frac{\partial u}{\partial \mathbf{n}}\right]=\left\{ \begin{aligned}
&3\sqrt{-x_1}, \quad \text{if}\ x_1<0,\\
&\ \ 0, \qquad \ \  \text{if}\ x_1\geqslant 0. 
\end{aligned}\right.
\end{equation}
Let $q^*=\nabla  v_2 $ and $\lambda=\left[\frac{\partial  v_2 }{\partial\mathbf{n}}\right]$. Then the  assumption (\ref{partial_zero_mean}) is satisfied. We take into account (\ref{6.1}), Remark~\ref{Remark 1} and apply use the estimate (\ref{3}), which gives
$$
\begin{aligned}
\|\nabla( v_2 -u)\|_{\Omega}=\frac{16}{3\sqrt{5}}a^4&\leqslant \sqrt{2}\left(\int\limits_{-a}^0 3\sqrt{-x_1}\, v (x_1,0)dx_1\right)^{1/2}\\
&+\frac{a}{\pi}\left(
\|\Delta v_2 \|^2_{\Omega_+}
+
\|\Delta v_2 \|^2_{\Omega_-}\right)^{1/2}.
\end{aligned}
$$
This estimate has the  efficiency index
$$
1\leq \frac{\mathfrak{M}_3\left( v , \nabla v_2, \left[\frac{\partial v_2}{\partial \mathbf{n}}\right],0\right)}{\|\nabla( v_2 -u)\|_{\Omega}}\leq
\frac{\sqrt{22}}{\pi}+\sqrt{\frac{45}{2\cdot 77}}a^{-5/4}\approx
1.493+0.541 a^{-5/4},
$$
which shows that the upper bound is quite realistic.
\end{exa}
\vspace{0.1cm}

In the next example, we study the behavior of error majorants for some sequences of the approximate solutions $(v_{3,\varepsilon} )\subset \mathbb{K}$, which converges to the exact solution $u$ as $\varepsilon \to 0$.
\medskip

\begin{exa} \label{example-3}
Let $v=v_{3,\varepsilon}$ be defined as follows:
$$
v_{3,\varepsilon}(x_1,x_2):=u(x_1,x_2)+\varepsilon^2 \left\{
\begin{aligned}
&0, \qquad \qquad \qquad \qquad \qquad \qquad \qquad   \text{if}\  (x_1, x_2) \in \Omega,\ x_1 \leqslant -\varepsilon,\\
&\beta (x_1) (a+x_1+x_2) (a+x_1-x_2), \quad \text{if}\  (x_1, x_2) \in \Omega,\ -\varepsilon<x_1\leqslant 0,\\
&\beta (x_1) (a-x_1-x_2)(a-x_1+x_2), \quad \text{if}\  (x_1, x_2) \in \Omega,\ 0<x_1 \leqslant a,
\end{aligned}
\right.
$$
where $\varepsilon \in (0;a)$ is an arbitrary small number  and $\beta(x_1)=(a-x_1)(x_1+\varepsilon)^2$. 

For any $\varepsilon \in (0;a)$ we have $v_{3,\varepsilon} \in \mathbb{K}$, and $v_{3,k} \geqslant u$ in $\Omega$. It is also evident that $\mathcal{M}^{v_{3,\varepsilon}}_0 \subset \mathcal{M}^u_0$  (see Fig.~\ref{Example-Omega} (right)); in other words, for any $\varepsilon$ the function $v_{3,\varepsilon}$ has  smaller coincidence set that $u$.

First, we set $q^*=\nabla u$, $\lambda=\left[\frac{\partial u}{\partial \mathbf{n}}\right]$. Taking into account (\ref{6.1}) and appyling the estimate (\ref{estimate1}), we obtain by direct calculations the following equalities:
\begin{align*}
\|\nabla (v_{3,\varepsilon}-u)\|_{\Omega} &=\frac{2}{15 \sqrt{7}}\varepsilon^2 \mathscr{A}(a,\varepsilon),\\
\mathfrak{M}(v_{3,\varepsilon}, \nabla u, \left[\frac{\partial u}{\partial \mathbf{n}}\right], 0) &=\frac{2}{15 \sqrt{7}}\varepsilon^2 \mathscr{A}(a,\varepsilon) +4\sqrt{6} \varepsilon^{11/4} \mathscr{B}(a,\varepsilon),
\end{align*}
where
\begin{align*}
\mathscr{A}(a,\varepsilon)&=(3a^{10}+30a^9\varepsilon+135a^8\varepsilon^2+360 a^7\varepsilon^3)^{1/2} +o(\varepsilon^2),\\
\mathscr{B}(a,\varepsilon)&=
\left(\frac{a^3}{35}-\frac{\varepsilon a^2}{105}-\frac{\varepsilon^2 a}{231}+ 
\frac{\varepsilon^3}{429}\right)^{1/2}.
\end{align*}
Thus, the efficiency index takes the form
\begin{equation} \label{6.2}
1 \leqslant \frac{\mathfrak{M}(v_{3,\varepsilon}, \nabla u, \left[\frac{\partial u}{\partial \mathbf{n}}\right], 0)}
{\|\nabla (v_{3,\varepsilon}-u)\|_{\Omega} }= 1+30 \sqrt{42} \varepsilon^{3/4} \frac{\mathscr{B}(a,\varepsilon)}{\mathscr{A}(a,\varepsilon)}.
\end{equation}
Obviosly, the last term on the right-hand side of (\ref{6.2}) tends to zero as $\varepsilon \to 0$.

Next, we take $q^*=\nabla v_{3,\varepsilon}$ and $\lambda=\left[\frac{\partial v_{3,\varepsilon}}{\partial \mathbf{n}}\right]$. Due to (\ref{6.1}), (\ref{6.3}), and the equality $\big[\frac{\partial v_{3,\varepsilon}}{\partial \mathbf{n}}\big]=\big[\frac{\partial u}{\partial \mathbf{n}}\big]$, we get 
\begin{multline*}
\mathfrak{M}\left(v_{3,\varepsilon}, \nabla v_{3,\varepsilon}, \left[\frac{\partial u}{\partial \mathbf{n}}\right],0\right)=\sqrt{6}\varepsilon
\left(\int\limits_{-\varepsilon}^0 \sqrt{-x_1} \beta(x_1) (a+x_1)^2dx_1\right)^{1/2}\\
+\frac{a}{\pi} \left(\|\Delta v_{3,\varepsilon}\|^2_{\Omega_-}+\|\Delta v_{3,\varepsilon}\|^2_{\Omega_+}\right)^{1/2}=4\sqrt{6}\varepsilon^{11/4}\mathscr{B}(a,\varepsilon)+\frac{2}{3\pi} \sqrt{\frac{2}{35}}a \varepsilon^2 \mathscr{C}(a,\varepsilon),
\end{multline*}
where 
$\mathscr{C}(a,\varepsilon)=(37 a^8+296 a^7\varepsilon +2716 a^6 \varepsilon^2 -1288  a^5\varepsilon^3)^{1/2}+ o(\varepsilon^2)$.

In this case, the majorant (\ref{estimate1}) has the efficiency index
$$
1 \leqslant \frac{\mathfrak{M}(v_{3,\varepsilon}, \nabla v_{3,\varepsilon}, \left[\frac{\partial v_{3,\varepsilon}}{\partial \mathbf{n}}\right], 0)}
{\|\nabla (v_{3,\varepsilon}-u)\|_{\Omega} }=
 30 \sqrt{42}\varepsilon^{3/4} \frac{\mathscr{B}(a,\varepsilon)}{\mathscr{A}(a,\varepsilon)} +\frac{a\sqrt{10}}{\pi} \frac{\mathscr{C}(a,\varepsilon)}{\mathscr{A}(a,\varepsilon)}.
$$
It is easy to see that if $\varepsilon$ tends to zero  then the efficiency index can not exceed $3.54$.
\end{exa}

\medskip

\begin{rem}
In the above examples,
rather simple functions $q^*$ and $\lambda$ (constructed directly by means of the function
$v$)  provide quite realistic bounds of the error. In more
complicated examples, such a simple choice might lead to a considerable
overestimation of the error. In this case, so defined $q$ and $\lambda$
may be considered as a starting point for the iteration process of majorant
minimization that generates a monotonically decreasing sequence
of numbers, which are guaranteed upper bounds of the error.
\end{rem}

\subsection*{Acknowledgments}
D.A. was partly supported by the ''RUDN University Program 5-100'' and by the Russian Foundation of Basic Research (RFBR) through the grant 17-01-00678. 

\noindent
S.R. was partly supported by the Russian Foundation of Basic Research (RFBR) through the grant 17-01-00099.

\bibliography{Bibliography(ThinObstacle)}
\end{document}